\documentclass[11pt,a4paper]{amsart}

\usepackage{amsmath,amssymb,amsthm,amscd,verbatim, enumerate}
\bibliographystyle{plain}
\usepackage{graphicx}
\usepackage{epsfig}
\usepackage{hyperref}

\newtheorem{thm}{Theorem}
\newtheorem{lem}[thm]{Lemma}
\newtheorem{obs}[thm]{Observation}
\newtheorem{claim}[thm]{Claim}

\newtheorem{cor}[thm]{Corollary}
\newtheorem*{thm*}{Theorem}
\newtheorem{quest}[thm]{Question}

\theoremstyle{definition}

\theoremstyle{remark}

 
 \newcommand{\calT}{\mathcal{T}}

\newcommand{\calF}{\mathcal{F}}

\newcommand{\N}{\mathbb{N}}

\newcommand{\DEF}{\it}

\renewcommand{\le}{\leqslant}
\renewcommand{\leq}{\leqslant}
\renewcommand{\ge}{\geqslant}
\renewcommand{\geq}{\geqslant}


\begin{document}

\title[Relaxed $3$-Coloring of Planar Graphs]{Coloring
  Planar Graphs with three Colors and no Large Monochromatic
  Components}

\author{Louis Esperet} \address{Laboratoire G-SCOP (CNRS,
   Grenoble-INP) \\ Grenoble \\ France}
\email{louis.esperet@g-scop.fr}

\author{Gwena\"el Joret}
\address{Department of Mathematics and Statistics \\
  The University of Melbourne\\
  Melbourne\\
  Australia}

\email{gwenael.joret@unimelb.edu.au}

\thanks{Louis Esperet is partially supported by ANR Project
   Heredia under Contract \textsc{anr-10-jcjc-0204-01}. 
  Gwena\"el Joret is supported by a 
DECRA Fellowship from the Australian Research Council.}

\date{}
\sloppy

\begin{abstract}
We prove the existence of a function $f :\N \to \N$ 
such that the vertices of every planar graph with maximum
degree $\Delta$ can be $3$-colored in such a way that each
monochromatic component has at most $f(\Delta)$ vertices. This is best
possible (the number of colors cannot be reduced and the dependence on
the maximum degree cannot be avoided) and answers a question raised by
Kleinberg, Motwani, Raghavan, and Venkatasubramanian in 1997. 
Our result extends to graphs of bounded genus. 
\end{abstract}
\maketitle

\section{Introduction}

A proper vertex coloring of a graph $G$ is an assignment of colors to
the vertices of $G$ such that every color class is a stable set. In
other words, in each color class, connected components consist of
singletons. In this paper we investigate a relaxed version of this
classical version of graph coloring, where connected components in each
color class, called \emph{monochromatic components} in the rest of the
paper, have bounded size.

The famous HEX Lemma implies that in every 2-coloring of the triangular
$k\times k$-grid, there is a monochromatic path on $k$ vertices. This
shows that planar graphs with maximum degree $6$ cannot be 2-colored
in such a way that all monochromatic components have bounded size. On
the other hand, Haxell, Szab\'o and Tardos~\cite{HST03} proved that
every (not necessarily planar) graph with maximum degree at most $5$ can be
2-colored in such a way that all monochromatic components have size at
most 20000. This bound was later reduced to 1908 by Berke~\cite{B08}.

As for three colors, Kleinberg, Motwani, Raghavan, and
Venkatasubramanian~\cite[Theorem 4.2]{KMRV97} constructed planar
graphs that cannot be $3$-colored in such a way that each
monochromatic component has bounded size.  However, their examples
have large maximum degree, which prompted them to ask the following
question.

\begin{quest}{\cite[Question 4.3]{KMRV97}}\label{quest}
Is there a function $f:\N \to \N$  
such that every planar graph with maximum degree
at most $\Delta$ has a 3-coloring in which each monochromatic
component has size at most $f(\Delta)$?
\end{quest}

A similar construction was given by Alon, Ding, Oporowski and
Vertigan~\cite[Theorem 6.6]{ADOV03}, who also pointed that they do not
know whether examples with bounded maximum degree can be
constructed. Question~\ref{quest} was also raised more recently by
Linial, Matou\v sek, Sheffet and Tardos~\cite{LMST08}.  Our main
result is a positive answer to this question.

\begin{thm}
\label{thm:main_intro} 
There exists a function $f:\N \to \N$ such that every
planar graph with maximum degree $\Delta$ has a 3-coloring
in which each monochromatic component has size at most $f(\Delta)$.
\end{thm}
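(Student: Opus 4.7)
The plan is to use the layered structure of planar graphs together with a coloring lemma for graphs of bounded treewidth and maximum degree, building the $3$-coloring in a two-tier construction.

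First, I would prove an auxiliary lemma: for all integers $w$ and $\Delta$, there exists $g(w,\Delta)$ such that every graph of treewidth at most $w$ and maximum degree at most $\Delta$ admits a $2$-coloring in which every monochromatic component has at most $g(w,\Delta)$ vertices. I would prove this by induction along a nice tree decomposition of width $w$, processing bags from the leaves upward while maintaining the invariant that every partially-built monochromatic component meeting the current bag has bounded size. When such a component threatens to grow too large, one recolors a vertex in the bag to cut it off; the bounded bag size together with the bound on $\Delta$ ensures that this recoloring only affects a bounded-size neighborhood in the other color.

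Second, I would fix a BFS layering $L_0, L_1, \ldots$ of $G$ rooted at an arbitrary vertex. The Robertson--Seymour bound on the treewidth of planar graphs of bounded radius gives that for each fixed $k$, the subgraph $G[L_{jk+1} \cup \cdots \cup L_{(j+1)k-1}]$ has treewidth bounded by a function of $k$. Choosing $k$ sufficiently large depending on $\Delta$, I would color every vertex in the ``separator layers'' $L_{jk}$ with color~$3$, and apply the auxiliary lemma inside each slab $G[L_{jk+1} \cup \cdots \cup L_{(j+1)k-1}]$ with the palette $\{1,2\}$. Monochromatic components of colors $1$ and $2$ are then confined to individual slabs of bounded treewidth and maximum degree, and so have bounded size.

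The main obstacle is to control the monochromatic components of color~$3$: these lie in the single BFS layers $L_{jk}$, but each such layer is an induced subgraph of a planar graph and may have arbitrarily many vertices. To overcome this, I would iterate the construction inside each $L_{jk}$: viewing it as a planar graph in its own right, apply the same recipe with a cyclically rotated palette so that the new ``separator color'' differs from~$3$ (say it becomes color~$1$, with the inner slabs 2-colored by $\{2,3\}$). The central technical difficulty is to guarantee that this recursion terminates in a number of levels depending only on $\Delta$, and that same-colored components arising at different recursion levels do not glue together across layer boundaries into large monochromatic components. A careful rotation scheme, aligned with the BFS radii at successive recursion levels and supported by a potential argument that tracks the decrease in the relevant structural parameter, is where I expect the bulk of the technical work to lie.
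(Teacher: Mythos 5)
Your plan diverges from the paper's, and the two difficulties you flag at the end are genuine obstructions, not technicalities. The most serious is recursion depth. When you recurse inside a separator layer $L_{jk}$, you re-layer it by BFS and keep only its new separator sub-layers, so the graph you pass down to the next level has size roughly a $1/k$ fraction of the previous one --- but nothing else improves: a BFS layer of a planar graph is again a planar graph of the same maximum degree, and its own BFS layers do not become structurally simpler. The natural termination is therefore at depth $\Theta(\log_k |G|)$, and the component bound you would produce depends on $|G|$ rather than on $\Delta$ alone, which defeats the point of the theorem. The second difficulty --- that independently produced colorings of adjacent pieces can thread together into long alternating chains of small pieces --- also has no fix within your scheme: the auxiliary treewidth lemma is applied to each slab (and to each separator layer at the next recursion level) \emph{in isolation}, so nothing prevents a color-$c$ component of the final coloring from bouncing back and forth between a slab and its neighboring separator layer, picking up a fresh small color-$c$ piece at each crossing.

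The paper's proof sidesteps both problems with one mechanism. It layers the plane graph by repeatedly removing the outer face (so every layer $O_i$ is outerplanar), and it assigns layer $O_i$ the two-color palette $\{1,2,3\}\setminus\{1+(i\bmod 3)\}$. There is no all-one-color separator layer, hence no recursion, and any monochromatic component is automatically confined to two consecutive layers, since three consecutive layers share no color. The coloring is built one layer at a time from the innermost outward, and the crucial Lemma~\ref{lem:blackbox_with_conditions} colors $O_i$ \emph{given} the coloring already fixed on $O_{i+1}$: the previously colored components of $O_{i+1}$ in the shared color are contracted to form the stable set $S$, and the lemma ensures that in the new layer the shared color has tiny components (size $O(\Delta)$) while the other color is merely bounded. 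This ``small color / large color'' asymmetry, handed down as the induction advances, is exactly the cross-layer coordination that your independent slab colorings lack; it is what bounds the gluing across interfaces. Your auxiliary treewidth lemma is in the right spirit (it plays a role analogous to Lemma~\ref{lem:blackbox_with_conditions}), but without this coordination and without eliminating the recursion, the approach cannot yield a bound that is a function of $\Delta$ only.
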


This theorem will be proved in Section~\ref{sec:main}. Let us remark
that we prove Theorem~\ref{thm:main_intro} with a rather large
function $f$, namely $f(\Delta)=(15\Delta)^{32\Delta+8}$, which is
almost surely far from optimal. We have strived to make our proofs as
simple as possible, and as a result we made no effort to optimize the
various bounds appearing in the paper.

In Section~\ref{sec:surfaces}, we extend Theorem~\ref{thm:main_intro}
to graphs embeddable in a fixed surface. This improves a special case
of a result of Alon, Ding, Oporowski, and Vertigan~\cite{ADOV03}, who
proved that for every proper minor-closed class of graphs $\mathcal G$, 
there is 
a function $f_{\mathcal G}:\N \to \N$ 
such that every graph in $\mathcal G$ with
maximum degree $\Delta$ can be 4-colored in such way that each
monochromatic component has size at most $f_{\mathcal G}(\Delta)$.

Finally, in Section~\ref{sec:Conclusion}  
we conclude with some remarks and open problems.

\section{Preliminaries}
\label{sec:preliminaries}

All graphs in this paper are finite, undirected, and simple.  We
denote by $V(G)$ and $E(G)$ the vertex and edge sets, respectively, of
a graph $G$.  We use the shorthand $|G|$ for the number of vertices of
a graph $G$ and denote the maximum degree of $G$ by $\Delta(G)$. 
We let $\deg_{G}(v)$ denote the degree of vertex $v$ in $G$. 

The term `coloring' will always refer to a vertex coloring
of the graph under consideration. For simplicity, we identify colors 
with positive integers, and we let a $k$-coloring be a coloring using  
colors in $\{1,2, \dots, k\}$. 
Note that we do not require a
coloring to be proper, that is, adjacent vertices may receive the same
color. Given a coloring $\phi$ of $G$, a {\DEF monochromatic
  component} is a connected component of the subgraph of $G$ induced
by some color class. A monochromatic component of color $i$ is also
called an {\DEF $i$-component}. The {\DEF size} of a component is its
number of vertices.

\section{Proof of the main Theorem}\label{sec:main}

We start with a brief sketch of the proof of
Theorem~\ref{thm:main_intro}. We consider a decomposition of the
vertex set of our planar graph $G$ drawn in the plane into sets
$O_1,O_2,\ldots,O_k$, each inducing an outerplanar graph. The set
$O_1$ is the vertex set of the outerface of $G$, and for $i=2, \dots,
k$, the set $O_i$ is the vertex set of the outerface of the subgraph
of $G$ induced by $V(G)\setminus (\bigcup_{1\le j\leq i-1} O_j)$.

We color the graph $G$ with colors $1,2,3$ in such way that for each
$i \in \{1,\ldots,k\}$, no vertex of $O_i$ has color $1+ (i \bmod
3)$. This implies that each monochromatic component is contained in
the union of two consecutive sets $O_i$ and $O_{i+1}$. Starting with
$O_{k}$, we color the sets $O_i$ one after the other in decreasing
order of their index $i$.  Given a coloring of $O_{i+1}$, we extend
this coloring to a coloring of $O_{i+1} \cup O_{i}$.  This extension
is done so as to maintain the property that in one of the two color
classes of $O_i$, monochromatic components are particularly small;
thus the two colors do not play symmetric roles, one is `small' and
the other `large'.  The small color of $O_{i+1}$ then becomes the
large color of $O_i$, while the large color of $O_{i+1}$ does not
appear at all in $O_i$.

While the above approach is natural, we found that making it
work required to carefully handle a number of situations. In
particular, we were led to introduce a technical lemma,
Lemma~\ref{lem:blackbox_with_conditions} below, whose proof might
appear a bit uninviting to the otherwise interested
reader. We hope the reader will bear with us till the main part of the
argument, which is provided by Theorem~\ref{thm:main}.

\begin{lem}\label{lem:blackbox_with_conditions}
Let $G$ be a connected plane graph whose vertex set is partitioned
into an induced path $P$ on at least $3$ vertices, 
and a stable set $S$ with a distinguished vertex $r$. 
Let $d$ be the maximum degree of a vertex
in $P$, and let $\Delta := \Delta(G)$.  Assume further that

\begin{itemize}
\item $r$ is adjacent to the two endpoints of $P$ and no other vertex of $P$;
\item the outerface of $G$ is bounded by the chordless cycle $G[V(P)
  \cup \{r\}]$;
\item every vertex in $S$ has degree at least $2$;
\item if $u\in S$ has degree exactly $2$, then the two neighbors of
  $u$ on $P$ are not adjacent; and
\item every two consecutive vertices of $P$ have at least one
  common neighbor in $S$.
\end{itemize}
Then there exists a $2$-coloring of $G$ in which the two
endpoints of $P$ and all the vertices in $S$ have color $2$, each
$1$-component has size at most $2d+1$, and each $2$-component has
size at most $(3\Delta)^{3d-4}$.
\end{lem}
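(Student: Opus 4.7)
My plan is by induction, with a straightforward base case when the path $P$ is short and a structural splitting argument when it is long. The splitting exploits planarity together with conditions~(iv) and~(v) of the lemma.

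For the base case, suppose $|P|\le 2d+3$. I color every interior vertex of $P$ with color~$1$ and every other vertex (the two endpoints of $P$ and all of $S$) with color~$2$. The single $1$-component is then contained in the interior of $P$ and has size at most $|P|-2\le 2d+1$. The single $2$-component is all of $V(G)$; since every vertex of $S$ has degree at least~$2$ with all neighbors on $P$ while every vertex of $P$ has degree at most $d$, we have $2|S|\le d\,|P|$, so $|V(G)|\le |P|(1+d/2)\le (2d+3)(1+d/2)$. For $d\ge 3$ this sits comfortably below $(3\Delta)^{3d-4}$.

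For the inductive step ($|P|\ge 2d+4$), I look for a vertex $s^{*}\in S\setminus\{r\}$ together with two of its $P$-neighbors $p_a,p_b$ such that the cycle $C'=p_a p_{a+1}\cdots p_b s^{*}$ is chordless and has length at least~$4$. Condition~(v) gives a common $S$-neighbor for every pair of consecutive vertices of $P$, and condition~(i) rules out $r$ as this common neighbor whenever an interior vertex of $P$ is involved, so we may concentrate on $S\setminus\{r\}$. Planarity combined with stability forces the family of intervals $[\min\{j:p_j\in N(s)\},\max\{j:p_j\in N(s)\}]$, $s\in S\setminus\{r\}$, to be laminar, and condition~(iv) ensures each such interval contains at least~$3$ indices. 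I select $s^{*}$ with inclusion-minimal interval and take $p_a,p_b$ to be two of its $P$-neighbors that are consecutive in $N(s^{*})\cap V(P)$ and satisfy $b-a\ge 2$; the chordlessness of $C'$ then follows from $P$ being induced together with the choice of $p_a,p_b$ as consecutive $P$-neighbors of $s^{*}$.

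I apply the inductive hypothesis to the inner subgraph $G_1$ bounded by $C'$, with $s^{*}$ playing the role of $r$ and $P_1 = p_a\cdots p_b$ playing the role of $P$. Conditions~(i)--(iv) transfer essentially by construction: $P_1$ is induced, $s^{*}$ has degree exactly~$2$ in $G_1$ with the non-adjacent neighbors $p_a,p_b$, the outerface of $G_1$ is $C'$, and degree preservation for internal $S$-vertices follows from planarity (their $P$-neighbors cannot cross $C'$). The resulting coloring is glued to a coloring of the outer portion along the shared vertices $p_a,p_b,s^{*}$ (the outer portion is either handled recursively or by a direct base-case argument, after possibly iterating the splitting). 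The $1$-component bound passes through the recursion unchanged; the $2$-components can merge across $C'$ only through $p_a$, $p_b$, and $s^{*}$, and I expect the combined component to grow by a multiplicative factor of order $(3\Delta)^3$ at each recursive level, accounting for the exponent $3d-4$.

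The main obstacle I foresee is verifying condition~(v) for $G_1$: for two consecutive vertices $p_l,p_{l+1}$ of $P_1$, the common $S$-neighbor witnessed in $G$ may lie strictly outside the cycle $C'$ in the embedding and hence not be a vertex of $G_1$. Making the recursion go through seems to require a carefully tuned choice of $s^{*}$ (so that $s^{*}$ itself or some genuinely internal $S$-vertex plays the role), or else an augmentation of $G_1$ that preserves the relevant structure while keeping the degree parameters $d$ and $\Delta$ under control. This is presumably the technical heart of the proof and aligns with the authors' warning that the lemma may appear \emph{uninviting}.
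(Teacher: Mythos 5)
Your approach is genuinely different from the paper's: the paper does not use induction for this lemma at all, but instead constructs the coloring globally by building a rooted tree $T$ on the bounded faces of $G$, distinguishing \emph{corners} from \emph{pivots} of each face, coloring corners by an alternating scheme and pivots according to their face's depth in $T$ modulo $2d$, and then bounding $2$-component sizes by showing that the induced subtree $T_K$ of faces touching a $2$-component $K$ has height $O(d)$ and bounded branching. Your inductive-splitting idea, if it worked, would certainly be more modular, but there are at least three gaps that the paper's construction circumvents and that your outline does not close.

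First, the splitting cycle need not exist. When every vertex of $S\setminus\{r\}$ has a \emph{contiguous} block of $P$-neighbors (a perfectly admissible configuration under (i)--(v): think of a chain of ``fans'' of width $\ge 3$ glued end to end), there is no pair $p_a,p_b$ of consecutive $P$-neighbors of any $s^*$ with $b-a\ge 2$, so no chordless $C'$ of length $\ge 4$ bounding a nontrivial inner region. An inclusion-minimal interval does not save you here: one can show that if $s^*$ has contiguous $P$-neighbors then its interval is automatically inclusion-minimal, precisely because the interior is triangulated by the $s^*p_i$ edges and stability plus condition (iv) forbid a further $S$-vertex inside. Second, the issue you flag about condition (v) failing for $G_1$ is real and unresolved; a common neighbor of $p_\ell,p_{\ell+1}$ may sit on the wrong side of $C'$, and it is not clear how to repair this without changing $d$ or $\Delta$ in a way your bookkeeping does not track. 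Third, and most seriously, the quantitative claim at the end does not cohere: your induction is on $|G|$, so the recursion depth can be $\Omega(|G|)$, whereas the target bound $(3\Delta)^{3d-4}$ is independent of $|G|$. Multiplying by $(3\Delta)^3$ per recursion level would therefore need the depth to be $O(d)$, and nothing in the splitting scheme guarantees this. In the paper, the $O(d)$ quantity appears not as a recursion depth but as a bound on the \emph{height} of $T_K$, which is controlled via the pivot-coloring rule modulo $2d$; that is the key mechanism your sketch lacks.
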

\begin{proof}
First we need to introduce a number of definitions and notations.  We
think of the path $P$ as being drawn horizontally in the plane with the 
vertices of $S$ above $P$; thus 
the vertices of $P$ are ordered from left to right. This ordering induces in
a natural way a linear ordering of every subset $X\subseteq V(P)$.
Two vertices of such a subset $X$ are said to be {\DEF consecutive in
  $X$} if they are consecutive in this ordering.  Let $x$ and $y$
denote the left and right endpoint, respectively, of the path $P$.

For simplicity, the color {\DEF opposite to} 1 is defined to be 2, and
vice versa.  Consider a subset $X \subseteq V(P)$ with $|X| \geq 2$
and call $a$ and $b$ the leftmost and rightmost vertices of $X$,
respectively. If $a$ and $b$ are colored, each either 1 or 2,
but no vertex in $X \setminus \{a, b\}$ is colored, then an {\DEF
  $\{a,b\}$-alternate coloring} of $X$ consists in keeping the colors
on $a, b$, and coloring the vertices of $X \setminus \{a, b\}$ (if
any) as follows. We enumerate the vertices of $X$ from left to right
as $a, x_{1}, \dots, x_{k}, b$.  If $k=1$, then $x_{1}$ is colored
with color 2 if both $a$ and $b$ have color 1; otherwise, $x_{1}$ is
colored with color 1.  If $k\geq 2$, then $x_{1}$ and $x_{k}$ are
colored with the color opposite to that of $a$ and $b$, respectively,
and for each $i\in \{2, \dots, k-1\}$, the vertex $x_{i}$ is colored
with the color opposite to that of $x_{i-1}$.  Let us point out some
simple but useful properties of this coloring:
\begin{itemize}
\item no three consecutive vertices in $a, x_{1}, \dots, x_{k}, b$
  have the same color;
\item if $k\geq 1$ and $a$ has color 2, then $x_{1}$ has color
  1, and
\item if $k\geq 1$ and $b$ has color 2, then $x_{k}$ has color 1. 
\end{itemize}
These properties will be used repeatedly, and sometimes implicitly, in
what follows.

Let $\calF$ be the set of bounded faces of $G$.  For $f \in \calF$,
let $\partial f$ denote the subgraph of $G$ which is the boundary of
$f$.  We note that, because of our assumptions on $S$, every edge of
$P$ is included in the boundary of a triangular face of $\calF$.

Let $\rho$ denote the unique bounded face of $G$ which includes the
vertex $r$ in its boundary.  We define a rooted tree $T$ with vertex
set $\calF$ and root $\rho$ inductively as follows. First, let
$s(\rho) = r$ and let $$S(\rho)=(S\cap V(\partial \rho)) \setminus
\{r\}.$$ Let the children of $\rho$ in $T$ be the faces distinct from
$\rho$ that are incident to some vertex in $S(\rho)$.  Now, consider a
face $f \in \calF \setminus \{\rho\}$ with parent $f^*$ in $T$.  Let
$s(f)$ be the unique vertex of $S$ included in $V(\partial f) \cap
V(\partial f^*)$ (the existence and uniqueness of $s(f)$ will be
proved below). Let $$S(f)=(S\cap V(\partial f))
\setminus \{s(f)\}.$$ The children of $f$ are then all the faces $f'\ne f$
incident to a vertex of $S(f)$. 

In order to show that $T$ is well defined, we only
need to prove that the vertex $s(f)$ defined above exists and is
unique. The existence follows from the definition of $T$, since $f$
and $f^*$ share a vertex of $(S\cap V(\partial f^*))\setminus
\{s(f^*)\}$. Note also that for any vertex $v \in (S\cap V(\partial
f^*))\setminus \{s(f^*)\}$, the vertices $v^-$ and $v^+$ just preceding
and following $v$ in a boundary walk of $f^*$ lie both on $P$ and any
face incident to $v$ distinct from $f^*$ is inside the region bounded
by $v,v^-,v^+$ and the subpath of $P$ between $v^-$ and $v^+$. It
follows that $s(f)$ is unique.

The {\DEF depth} $dp(f)$ of a face $f\in \calF$ 
is its depth in $T$, the root $\rho$ having depth $0$.
Observe that, because of our assumptions on $S$, the leaves of $T$ are
precisely the triangular faces sharing an edge with the
outerface. Observe also that $T$ can be equivalently defined as the
(unique) breadth-first search tree rooted at $\rho$ of the graph with
vertex set $\calF$ in which two vertices $f,f'\in \calF$ are adjacent
if the corresponding faces in $G$ share a vertex of $S$.

A face $f \in \calF$ is uniquely determined by, and uniquely
determines, the triplet $[a,s,b]$ where $s=s(f)$ and $a,b$ are the
leftmost and rightmost neighbors of $s$ on $P$ included in $\partial
f$, respectively.  With a slight abuse of notation, we write
$f=[a,s,b]$ to denote the face $f$ with triplet $[a,s,b]$.

We define the following sets of vertices associated to a face
$f=[a,s,b] \in \calF$ (see Figure~\ref{fig:bb} for an illustration). 
The set $$\Sigma(f)=(V(P)\cap V(\partial f)) \setminus \{a,b\}$$ 
is the set of {\DEF corners} of $f$, and 
$$\Pi(f)=N(S(f)) \setminus (\Sigma(f) \cup \{a, b\})$$ is the set of
{\DEF pivots} of $f$. Here, $N(X)$ denotes the set of vertices of
$V(G) \setminus X$ having a neighbor in $X$.  

Observe that the sets $\Sigma(f), \Sigma(f'), \Pi(f), \Pi(f')$ are
pairwise disjoint for every two distinct faces $f, f' \in \calF$.
Moreover, $\bigcup_{f \in \calF} (\Sigma(f) \cup \Pi(f)) = V(P)
\setminus \{x, y\}$.  Thus every {\em internal} vertex of the path $P$
is either a corner or a pivot of some uniquely determined face $f$,
which we denote by $f(v)$.  When $v$ is a pivot, the unique neighbor
of $v$ in $S$ that is incident to $f(v)$ is denoted by $\psi(v)$.  For
each vertex $v\in S \setminus \{r\}$, let similarly $f(v)$ denote the
unique face $f \in \calF$ such that $v\in S(f)$.  

\begin{figure}[htbp]
\centering
\includegraphics[scale=1]{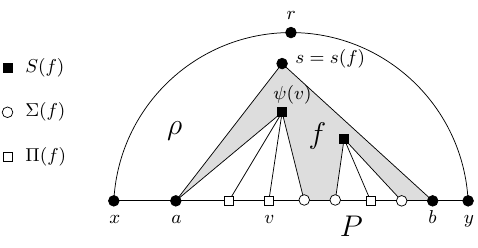}
\caption{A face $f=[a,s,b]$ and the corresponding sets $S(f)$,
  $\Sigma(f)$ and $\Pi(f)$.} \label{fig:bb}
\end{figure}

\smallskip

Consider two faces $f=[a,s,b]$ and $f'$ such that $f'$ is inside 
the cycle formed  
by the edges $as, bs$ and the path from $a$ to $b$ on $P$. 
Note that $f$ is an ancestor of $f'$ in $T$. The following
observation describes precisely the subgraph of $T$ induced by all the 
faces incident to a given internal vertex of $P$ (see Figure~\ref{fig:ne} for
an illustration).

\begin{obs}
\label{obs:crucial}
Let $w$ be an internal vertex of $P$. Let $u_{1}, \dots, u_{k}$ be the
neighbors of $w$ in clockwise order around $w$, with $u_{1}$ and
$u_{k}$ the left and right neighbors, respectively, of $w$ on $P$.
For each $i\in \{1, \dots, k-1\}$, let $f_{i}$ be the unique face in
$\calF$ with $wu_{i}, wu_{i+1} \in E(\partial f_{i})$.
\smallskip

(a) If $w$ is a corner of $f_{j}$ for some $j \in \{1, \dots, k-1\}$,
then $f_1,f_2,\ldots, f_{k-1}$ is a path in $T$, with $f_j=f(w)$ being the
face of smallest depth.  In particular, $dp(f_i) - dp(f(w))\le d-2$
for each $i \in \{1, \dots, k-1\}$.

\smallskip

(b) If $w$ is a pivot with $\psi(w)=u_{j}$, then $j\in \{2, \dots, k-1\}$ and 
$f_1,\ldots, f_{j-1},
f(w), f_{j},\ldots, f_{k-1}$ is a path in $T$, with $f(w)$ being the
face of smallest depth.  In particular, $dp(f_i) - dp(f(w))\le d-2$
for each $i \in \{1, \dots, k-1\}$.
\end{obs}

\begin{figure}[htbp]
\centering
\includegraphics[scale=1]{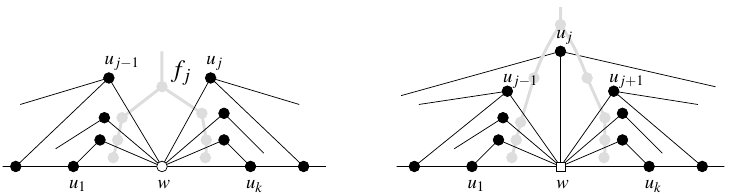}
\caption{The two configurations in Observation~\ref{obs:crucial}. The
  tree $T$ is depicted in gray.} \label{fig:ne}
\end{figure}

An internal vertex $v$ of $P$ is said to be an {\DEF isolated pivot} if 
$v$ is a pivot, $\deg_{G}(v) = 3$, 
and the two faces in $\calF$ incident to $v$ are triangular. 

With these definitions in hand, we may now describe our coloring of
the graph $G$. First, recall that the vertices in $S$ must be colored 
with color 2. So it remains to color the vertices of $P$. 
These vertices are colored as follows. We perform a
depth-first walk in $T$ starting from its root $\rho$, 
and for each face $f \in \calF$ encountered
we color the vertices in $\Pi(f)$ and $\Sigma(f)$. This ensures that,
when considering a face $f=[a,s,b]$ distinct from the root $\rho$, the
two vertices $a$ and $b$ are already colored.  Given $f=[a,s,b]$,
\begin{itemize}
\item if $f = \rho$, we color both $x$ and $y$ with color 2;
\item if $f \neq \rho$, 
we perform an $\{a, b\}$-alternate coloring of $\Sigma(f)\cup \{a,b\}$;
\item we color each isolated pivot in $\Pi(f)$ with color 2, and
\item we color each non-isolated pivot in $\Pi(f)$ with color 1 if $dp(f) \bmod
  2d \in \{0, \ldots, d-1\}$, and with color 2 otherwise.
\end{itemize}

Let us consider the maximum size of monochromatic components in this
coloring of $G$, starting with color 1. Since all vertices in $S$ and
the two endpoints $x$ and $y$ of $P$ have color 2, each 1-component of
$G$ is a subpath of $P \setminus \{x,y\}$.  We define a {\DEF $1$-path}
as a (not necessarily maximal) subpath of $P \setminus \{x,y\}$, every
vertex of which has color 1.

\begin{claim}
\label{cl:nonconsecutive}
If $Q$ is a $1$-path, then
each vertex in $S$ has at most two neighbors on $Q$, in which case
they are consecutive vertices of $Q$.
\end{claim}
\begin{proof}
Let $w_{1}, \dots, w_{k}$ be the vertices of $Q$ enumerated 
from left to right. Arguing by contradiction, suppose there exists
$u\in S$ adjacent to $w_{i}$ and $w_{j}$ with $i + 1 < j$, and choose
such a triple $(u, w_{i}, w_{j})$ with $j - i$ minimum, and with
respect to this, $dp(f(u))$ maximum. The vertices $w_{i}$ and
$w_{i+1}$ have a common neighbor $u' \in S$. If $u=u'$, then the
triple $(u, w_{i+1}, w_{j})$ is a better choice than $(u, w_{i},
w_{j})$, unless $j=i+2$, in which case $w_{i+1}$ is an isolated pivot
and has color 2 (here we use the fact that there cannot be any vertex 
of $S$ inside the cycles $uw_{i}w_{i+1}$ and $uw_{i+1}w_{i+2}$ since 
such a vertex would have degree exactly $2$ and would be adjacent to two 
consecutive vertices of $P$). Thus we obtain a contradiction in both cases, 
and hence, $u\ne u'$. 
It follows that $dp(f(u'))>dp(f(u))$ since $u'$ is inside the cycle consisting of the edges  
$uw_i,uw_j$ and the subpath of $Q$ between $w_i$ and $w_j$. 
Now, the vertex $u'$ cannot have degree exactly $2$, and thus 
$u'w_{\ell} \in E(G)$ for some $\ell \in \{i+2, \dots, j\}$. However, 
the triple  $(u', w_i, w_\ell)$ is then a better choice than 
 $(u, w_{i}, w_{j})$, a contradiction (indeed, either $\ell < j$, or 
 $\ell=j$ but $dp(f(u'))>dp(f(u))$). 
\end{proof}

We deduce that 1-components have bounded size.

\begin{claim}
\label{cl:not_inside}
Every $1$-path has at most $2d+1$ vertices. 
\end{claim}
\begin{proof}
Arguing by contradiction, suppose that $Q$ is a $1$-path with $2d+2$ vertices, and let $w_{1}, \dots, w_{2d + 2}$ be its vertices
enumerated from left to right. Let $u_{1} \in S$ be a common neighbor
of $w_{d + 1}$ and $w_{d + 2}$. By Claim~\ref{cl:nonconsecutive},
$w_{d+1}$ and $w_{d+2}$ are the only neighbors of $u_{1}$ on
$Q$, therefore $u_1$ has a neighbor in $V(P)\setminus V(Q)$ by our
assumption on $S$. Let $v_{1}$ be such neighbor at minimum distance
from $w_{d+ 2}$ on $P$. Either $v_{1}$ is on the right of $Q$ or
on the left of $Q$; since $w_{d+1}, w_{d+2}$ are the two middle
vertices of $Q$, these two cases are symmetric, and thus we may assume
without loss of generality that $v_{1}$ is on the right of $Q$.  Then
$[w_{d+2},u_1,v_1]$ is a face distinct from the root face
$\rho$. Let $z$ be the right neighbor of $w_{2d+2}$ on $P$ (thus
$z\notin V(Q)$), and let $A$ denote the $z$--$v_{1}$ subpath of $P$.

\begin{figure}[htbp]
\centering
\includegraphics[scale=1]{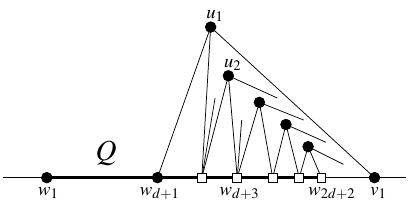}
\caption{Illustration of the proof of
  Claim~\ref{cl:not_inside}.} \label{fig:gp}
\end{figure}

For $i =2, \dots, d+1$, let $u_{i} \in S$ be a common neighbor of
$w_{d + i}$ and $w_{d + i + 1}$ (see Figure~\ref{fig:gp}). By
Claim~\ref{cl:nonconsecutive}, the vertices 
$u_{1}, \dots, u_{k}$ are all distinct, and thus
each such vertex has a neighbor in $V(P) \setminus V(Q)$, which must
then be on $A$ because of the face $[w_{d+2},u_1,v_1]$. Moreover,
for each $i \in \{1, \dots, d+1\}$, we have that $w_{d+i+1}$ is a
pivot, and $u_i=\psi(w_{d+i+1})$. For each such index $i$, let
$f_i=f(w_{d+i+1})=f(u_i)$. It follows from Claim~\ref{obs:crucial}
that $1\le dp(f_{i+1})-dp(f_{i})\le d-2$. This in turn implies 
that there exists an index $i\in \{1, \dots, d+1\}$ such that 
$dp(f_{i}) \bmod 2d \in \{d,\ldots,2d-1\}$. But then the pivot vertex 
$w_{d+i+1}$ was colored $2$ in our coloring of $G$, a contradiction. 
\end{proof}

We now bound the size of monochromatic components of color 2. Let thus
$K$ be a 2-component of $G$.  We start by gathering a few observations
about $K$.

Observe that, if $f \in \calF$ with $f=[a,s,b]$, then $\{a,b\}$
separates all vertices $v$ such that $v \in S(f')\cup \Sigma(f') \cup
\Pi(f')$ for some face $f'$ that is a descendant of $f$ in $T$ from
the remaining vertices of $G$. (Note that $f$ is considered to be a
descendant of itself). It follows:

\begin{obs}
\label{obs:blue_connected}
Let $f \in \calF$ with $f = [a,s,b]$,  
and let $K_f$ be the set of vertices $v \in V(K)$ such
that $v \in S(f')\cup \Sigma(f') \cup \Pi(f')$ for some face $f'$ that
is a descendant of $f$ in $T$. If there are two vertices $u,v \in V(K)$
with $u \in V(K_f)$ and $v\not\in V(K_f)$, then at least one of $a,b$ is in
$K$.
\end{obs}

Let $\calF_K$ be the set of faces $f\in \calF$ such that $S(f)\cup
\Sigma(f) \cup \Pi(f)$ contains a vertex of $K$, and let $T_K$ denote
the subgraph of $T$ induced by $\calF_K$. Suppose that $T_K$ is not
connected. Then $\calF_K$ contains two faces $f=[a,s,b]$ and $f'$ such
that the parent $f^*$ of $f$ is not in $\calF_K$ and $f'$ is not a
descendant of $f$.  By Observation \ref{obs:blue_connected}, this
implies that at least one of $a,b$ is in $K$, and consequently $s\in
V(K)$. Since $s\in S(f^*)$, we deduce that $f^* \in \calF_K$, a
contradiction. It follows:

\begin{obs}
\label{obs:blue_tree}
$T_K$ is a subtree of $T$.
\end{obs}

Let $\widetilde f$ be the face in $\calF_{K}$ having smallest depth in
$T$. We see $T_{K}$ as being rooted at $\widetilde f$. Our aim now
is to bound the height of $T_K$.

\begin{claim}
\label{claim:depth}
$T_K$ has height at most $3d-5$.
\end{claim}
\begin{proof}
Let $f_1$ be a leaf of $T_K$ farthest from $\widetilde f$. We may
assume that $f_1\ne \widetilde f$, since otherwise $T_K$ has height 0
and the claim trivially holds.  Let $A_K$ be set of ancestors of
$f_1$ in $T_K$, $f_{1}$ included.  Thus $A_K$ induces a path in $T_K$
with endpoints $f_1$ and $\widetilde f$.  Starting with $f_{1}$, we
define inductively a sequence $f_{1}, f_{2}, \dots, f_{t}$ of faces,
with $f_i=[a_i,s_i,b_i]$ and $f_{i}\in A_K$ for each $i \in \{1,
\dots, t\}$, as follows. For $i \geq 2$, if $f_{i-1}$ is distinct from
$\widetilde f$, then by Observation~\ref{obs:blue_connected}, at least
one of $a_{i-1},b_{i-1}$ is in $K$. Let $h_{i-1}$ denote such a
vertex, and let $f_i=f(h_{i-1})$.  If $f_{i-1} = \widetilde f$ then
$f_{i}$ is not defined, and $f_{i-1}=f_{t}$ becomes the last face in
the sequence.

Let $i\in\{2, \dots, t\}$. By definition of $T_K$, the face $f_i$ is
in $T_K$. By Observation~\ref{obs:crucial}, $f_i$ is an ancestor of
$f_{i-1}$, which implies inductively that $f_i \in A_K$ (since $f_1
\in A_K$).  Moreover, $dp(f_i) < dp(f_{i-1})$ since $f_i\ne
f_{i-1}$. Since $f_i=f(h_{i-1})$ and $f_{i-1}$ is incident to
$h_{i-1}$, Observation~\ref{obs:crucial} also implies that
$dp(f_{i-1})\le dp(f_{i})+d-2$.

Let $i \in \{2, \dots, t-1\}$. The vertex $h_{i}$ has to be connected
to $h_{i-1}$ by a path in $K$.  It follows from the
definition of an $\{a_i, b_i\}$-alternate coloring that $h_{i-1}$
cannot be a corner of $f_i$. (In fact, this is the key property of an
$\{a_i, b_i\}$-alternate coloring.)  Therefore, $h_{i-1}$ is a pivot of
$f_i$. Since  $S(f_{i-1})\cup \Sigma(f_{i-1}) \cup \Pi(f_{i-1}) \neq \varnothing$, the face $f_{i-1}$ is not triangular, and hence  
$h_{i-1}$ is not an isolated pivot. It follows
that $dp(f_i)\bmod 2d \in \{d, \ldots, 2d-1\}$.

Write $dp(f_2)=2kd+\ell_2$, with $d\le \ell_2 \le 2d-1$, and for each
$i \in \{3, \dots, t-1\}$, let $\ell_i=dp(f_i)-2kd$. Since $dp(f_{i})
< dp(f_{i-1})\le dp(f_{i})+d-2$ for each $i \in \{2, \dots, t-1\}$,
and $dp(f_i)\bmod 2d \in \{d, \ldots, 2d-1\}$, we have $d\leq
\ell_{t-1} < \ell_{t-2} < \cdots < \ell_{2} \leq 2d-1$. In particular,
$\ell_{2}-\ell_{t-1}\le d-1$. Now, the height of $T_K$ is precisely
$dp(f_1)-dp(f_t)=\sum_{i=2}^t
(dp(f_{i-1})-dp(f_{i}))=dp(f_1)-dp(f_{2})+\ell_{2}
-\ell_{t-1}+dp(f_{t-1})-dp(f_{t})$. By Observation~\ref{obs:crucial},
$dp(f_{t-1})-dp(f_{t})\le d-2$ and $dp(f_1)-dp(f_{2})\le d-2$. Using
that $\ell_{2}-\ell_{t-1}\le d-1$, we obtain that the height of $T_k$
is at most $3d-5$.
\end{proof}

Consider a face $f=[a,s,b]$ of $T_K$. By the definition of an $\{a,
b\}$-alternate coloring, there are at most two consecutive vertices
with color 2 in $\Sigma(f)$. Therefore, using
Observation~\ref{obs:blue_connected}, $\Sigma(f)$ contains at most 2
vertices of $K$ and $S(f)$ contains at most 3 vertices of $K$. It
follows that $|K\cap \Pi(f)|\leq 3(\Delta-2)$, which implies that
$S(f)\cup \Sigma(f) \cup \Pi(f)$ contains at most $3\Delta-1$ vertices
of $K$. Also, we deduce that $f$ has at most $3\Delta$ children in $T_K$.  
Using Claim~\ref{claim:depth}, we then obtain
$$|T_K|\leq \sum_{i=0}^{3d-5} (3\Delta)^{i} = \frac{(3\Delta)^{3d-4}}{3\Delta-1}$$ and hence $|K|\leq(3\Delta-1)|T_K| \leq (3\Delta)^{3d-4}$, as desired. 
This concludes the proof of Lemma~\ref{lem:blackbox_with_conditions}.
\end{proof}

At the expense of a slightly larger bound on the size of monochromatic
components, we may relax the requirements in
Lemma~\ref{lem:blackbox_with_conditions} as follows.

\begin{lem}\label{lem:blackbox}
Let $G$ be a connected plane graph whose vertex set is partitioned
into a chordless cycle $C$ and a stable set $S$ such that the cycle
$C$ bounds a face of $G$.  Let $d$ be the maximum degree of a vertex
in $C$, and let $\Delta := \Delta(G)$.  Then there exists a
$2$-coloring of $G$ in which each vertex in $S$ has color $2$, each
$1$-component has size at most $2d+5$, and each $2$-component has
size at most $d(6\Delta)^{3d+2}$.
\end{lem}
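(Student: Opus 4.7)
The plan is to reduce Lemma~\ref{lem:blackbox} to Lemma~\ref{lem:blackbox_with_conditions} by building from $G$ an auxiliary plane graph $G'$ satisfying the stronger hypotheses of the latter. By flipping the embedding if necessary, we may assume that $C$ bounds the outer face of $G$, and we treat degenerate cases (such as $|V(C)| \leq 3$) directly. As a first preprocessing step, delete every $S$-vertex of degree at most $1$; such vertices will be reinstated at the end with color $2$, each enlarging a single $2$-component by at most $1$ and altogether contributing at most a factor $d$ to the final $2$-component bound.

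We turn $C$ into a path with a root as follows. Fix an arbitrary edge $uv$ of $C$, delete it, and add a new vertex $r$ in the outer face of $G$ adjacent only to $u$ and $v$, so that $V(C)$ becomes an induced path $P$ from $u$ to $v$ in the resulting graph. For each $P$-edge $w_iw_{i+1}$ that is not already covered by an $S$-vertex of degree at least $3$ adjacent to both endpoints, we perform a local surgery inside the bounded face of $G$ containing $w_iw_{i+1}$: subdivide $w_iw_{i+1}$ by a new $P$-vertex $z_i$ and add a new stable-set vertex $s_i$ of degree $3$ adjacent to $w_i$, $z_i$, and $w_{i+1}$. Write $G'$ for the resulting graph, $P'$ for the extended path, and $S' = S \cup \{r\} \cup \{s_i\}$ for the augmented stable set with distinguished vertex $r$. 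Each $C$-vertex of $G$ acquires at most two extra edges in $G'$, so the maximum degree on $P'$ satisfies $d' \leq d+2$, and $\Delta(G') \leq \Delta + O(1)$.

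One checks that $G'$ satisfies all conditions of Lemma~\ref{lem:blackbox_with_conditions}: the outer face is bounded by the chordless cycle through $V(P') \cup \{r\}$; every $S'$-vertex has degree at least $2$; every degree-$2$ $S'$-vertex has non-adjacent $P'$-neighbors, the critical case being a preexisting degree-$2$ $S$-vertex $v$ that forms a triangle with some $w_iw_{i+1}$, whose two $P'$-neighbors are now separated along $P'$ by the subdivision vertex $z_i$; and every two consecutive vertices of $P'$ share an $S'$-neighbor, either a preexisting $S$-vertex of degree $\geq 3$ or the dummy $s_i$. Applying Lemma~\ref{lem:blackbox_with_conditions} to $G'$ then produces a $2$-coloring in which the $S'$-vertices (and in particular the $S$-vertices) get color $2$, each $1$-component has size at most $2d'+1 \leq 2d+5$, and each $2$-component has size at most $(3\Delta')^{3d'-4} \leq (6\Delta)^{3d+2}$. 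We restrict this coloring to $V(G)$ and reinstate the deleted leaves with color $2$; since each $C$-vertex has at most $d-2$ such leaves, each $2$-component grows by a factor of at most $d-1$, landing within the required bound $d(6\Delta)^{3d+2}$.

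The main obstacle will be to verify that the $1$-component bound $2d+5$ survives the passage from $G'$ back to $G$. The subtlety is that a subdivided $P$-edge $w_iw_{i+1}$ is restored as a direct edge in $G$, so two color-$1$ $C$-vertices separated in $G'$ by the color-$2$ subdivision vertex $z_i$ could in principle belong to the same $G$-$1$-component. However, in $G$ each pair of consecutive $C$-vertices still has a common $S'$-neighbor (either a preexisting $S$-vertex, a dummy $s_i$, or, after passing to the coloring, a common neighbor in $S$ accessible via the face tree of $G'$), so a direct adaptation of the proof of Claim~\ref{cl:not_inside} to color-$1$ subpaths of $C$ in $G$ (rather than of $P'$ in $G'$) using these common neighbors shows that each such subpath still has at most $2d+5$ vertices.
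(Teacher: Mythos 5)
Your overall strategy—reducing Lemma~\ref{lem:blackbox} to Lemma~\ref{lem:blackbox_with_conditions} by fixing up $G$—is the right one, but your specific construction (subdividing path edges by new vertices $z_i$) creates a genuine problem that you correctly identify but do not resolve, and that the paper's construction is specifically designed to avoid.

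The paper never subdivides the cycle $C$. Instead, it removes problematic $S$-vertices (those of degree at most $1$, and those of degree $2$ forming a triangle with an edge of $C$), adds new $S$-vertices next to degree-$2$ $C$-vertices, and adds edges from existing $S$-vertices to $C$-vertices so that every pair of consecutive $C$-vertices acquires a common $S$-neighbor—all without modifying $P$. Consequently a $1$-path of the modified graph $G''$ is literally a $1$-path of $G$, and the bound $2(d+2)+1=2d+5$ from Lemma~\ref{lem:blackbox_with_conditions} applies verbatim to $G$. Your construction breaks this: each $z_i$ is an isolated pivot and hence receives color $2$, so in $G'$ it separates $w_i$ from $w_{i+1}$; but after you discard $z_i$ and restore the edge $w_iw_{i+1}$, two color-$1$ vertices $w_i, w_{i+1}$ may be joined. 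A $1$-path of $G$ is thus a concatenation of possibly many $1$-paths of $G'$ glued across deleted $z_i$'s, and the bound $2d'+1$ on $1$-paths of $G'$ does not bound $1$-paths of $G$. Your final paragraph acknowledges exactly this, but the phrase ``a direct adaptation of the proof of Claim~\ref{cl:not_inside}'' is not a proof: that claim crucially uses that the vertices $w_{d+i+1}$ of a long $1$-path are \emph{pivots} of the relevant faces, whereas in your $G'$ they are typically \emph{corners} (the pivot role is usurped by the $z_i$'s), so the depth-increment argument does not carry over without substantial new work. This is the heart of the lemma, and leaving it at the level of a hope is a real gap.

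There is also a smaller slip in your verification of the degree-$2$ condition. You only subdivide $w_iw_{i+1}$ when no $S$-vertex of degree at least $3$ is adjacent to both endpoints. But $G$ can contain a degree-$2$ $S$-vertex $v$ forming a triangle with $w_iw_{i+1}$ \emph{and} a degree-$\geq 3$ $S$-vertex adjacent to both $w_i, w_{i+1}$; then you do not subdivide, $v$ survives in $S'$ with two adjacent neighbors on $P'$, and the hypotheses of Lemma~\ref{lem:blackbox_with_conditions} are violated. The paper sidesteps this by placing all such $v$ in $S^*$ and deleting them at the outset (later reinstating them colored $2$, which is what accounts for the extra factor $d$ in the $2$-component bound). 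You should do the same: your current treatment handles only the $\deg\le 1$ part of $S^*$.
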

\begin{proof}
We may assume without loss of generality that $C$ bounds the outerface
of $G$.  Let $S^*$ be the set of vertices $v\in S$ such that either
$\deg_G(v)\le 1$, or $\deg_G(v)=2$ and the two neighbors of $v$ are
adjacent. Let $G^*=G\setminus S^*$, and remove from $S$ the vertices
in $S^{*}$.  (We will treat the vertices in $S^{*}$ at the very end.)
We construct a new graph $G'$ from $G^*$ in two steps as follows.

\smallskip
\noindent {\it Step 1.} Take a maximal stable set $Z$ of the vertices
$\{v\in V(C),\, \deg_{G^*}(v)=2\}$ ($Z$ might be empty), and for each vertex 
$v\in Z$ add a vertex $s_v$ in $S$ adjacent to $v$ and its two
neighbors in $C$.

Note that this can be done so that the embedding stays planar and $C$
still bounds the outerface of the graph. By our choice of $Z$, after
Step 1 every vertex of $C$ has degree at least 3 (and thus, has at
least one neighbor in $S$).

\smallskip

\noindent {\it Step 2.} For each pair of consecutive vertices $u,v$ in
$C$ in anti-clockwise order having no common
neighbor in $S$, do the following. Let $f$ be the inner face incident
to $uv$, and let $s$ be the unique neighbor of $u$ in $S$ that is
incident to $f$. Add an edge between $s$ and $v$.

\smallskip

Again, this can be done so that the embedding stays planar and $C$
still bounds the outerface of the graph. 
(How the degrees of vertices increased will be considered later.) 
Let $G'$ be the graph obtained after Step 2. Note that $G'$ is a supergraph of $G^*$. 

Let $x,y$ be two arbitrarily chosen consecutive vertices of $C$, 
and let $P$ denote
the $x$--$y$ path in $C$ that avoids the edge $xy$. Subdivide the edge
$xy$ by adding a vertex $r$ between $x$ and $y$, and add $r$ to $S$.
Observe that the graph $G''$ obtained after this operation together
with the set $S$ satisfy the assumptions of
Lemma~\ref{lem:blackbox_with_conditions}. Indeed, $P$ is an induced path in
$G''$ with endpoints $x$ and $y$, and $r\in S$ is only adjacent to
$x,y$, while all other vertices in $S$ are inside 
the cycle induced by $V(P)\cup \{r\}$. Moreover, 
every vertex in $S$ has degree at least $2$; 
if $u\in S$ has degree exactly $2$, then the two neighbors of 
 $u$ on $P$ are not adjacent, and every two consecutive vertices of $P$ have
 at least one common neighbor in $S$.

The degree of each vertex of
$P$ increased by at most two during Steps 1 and 2, while the degree of
each vertex in $S$ can at worst be doubled at Step 2 (we might add an
edge $sv$ for every neighbor $u$ of $s$). It follows that $G''$ has
maximum degree at most $2\Delta$ and vertices of $P$ have degree at
most $d+2$. By Lemma~\ref{lem:blackbox_with_conditions}, $G''$ has a
$2$-coloring such that $1$-components have size at most
$2d+5$, $2$-components have size at most $(6\Delta)^{3d+2}$, and
$x,y,r$ have color 2 (in particular, $x$ and $y$ are in the same
2-component).

We now add back the edge $xy$ and the vertices of $S^*$, which we
connect to their original neighbors in $G$, and color them with
color 2.  By the definition of $S^*$ and the remark above, this does
not connect different 2-components of $G''$. Since each vertex of $C$
had at most $d$ neighbors in $S^*$, in the resulting graph $G'''$ the
size of each 2-component is at most $d(6\Delta)^{3d+2}$, while
1-components still have size at most $2d+5$ since they 
remain unchanged. Since the graph $G$ is a subgraph of $G'''$, 
these two bounds obviously hold for this coloring 
restricted to $G$. 
\end{proof}

Let $g_1: \N \to \N$ and $g_2: \N\times \N \to \N$ denote the bounds
on the sizes of 1- and 2-components, respectively, appearing in
Lemma~\ref{lem:blackbox}; namely $g_1(d) := 2d + 5$ and $g_2(d,\Delta)
:= d(6\Delta)^{3d+2}$.

For a plane graph $G$, we denote by $O(G)$ the set of vertices lying 
on the boundary of the outerface of $G$, and by $O_2(G)$ the set of vertices not in $O(G)$
that are adjacent to a vertex in $O(G)$. A plane graph is {\DEF
  near-triangulated} if all its faces are triangular, except possibly for
the outerface.  Note that if $G$ is near-triangulated, then $O_2(G)$
is precisely the set of vertices on the outerface of $G \setminus O(G)$.

We will use the following simple observation. 
\begin{obs} 
\label{obs:recolor}
Let $\ell\ge 1$ be an integer. Suppose we have a coloring of a
graph with maximum degree at most $\Delta\geq 1$ 
in which every $i$-component has
size at most $k$, for some color class $i$. Then, if we recolor at
most $\ell$ vertices of the graph, in the new coloring every
$i$-component has size at most $\ell \Delta k+\ell\le 2 \ell \Delta
k$.
\end{obs}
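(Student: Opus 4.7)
The plan is to analyze an arbitrary $i$-component $K$ in the new coloring by splitting $V(K)$ according to whether a vertex was recolored. Let $R$ denote the set of recolored vertices, so $|R|\le \ell$, and set $K' := V(K)\setminus R$. Every vertex of $K'$ has color $i$ both before and after the recoloring, so each connected component of $G[K']$ lies entirely inside some $i$-component of the original coloring and therefore has size at most $k$. It then suffices to bound the number $m$ of connected components of $G[K']$ by $\ell \Delta$, since then
\[
|K| \;=\; |K\cap R| + |K'| \;\le\; \ell + m\cdot k \;\le\; \ell + \ell\Delta k.
\]

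For the component count I would use that $K$ itself is connected: if $K\cap R = \varnothing$ then $K' = V(K)$ is connected and $m = 1 \le \ell\Delta$, so assume $K\cap R \neq \varnothing$. For any component $C$ of $G[K']$ one has $C \subsetneq V(K)$, and since $K$ is connected there is an edge from $C$ to $V(K)\setminus C$. Such an edge cannot end in another component of $G[K']$ (else the two components would be merged), so it must land in $K\cap R$. Thus each of the $m$ components contributes at least one edge with an endpoint in $K\cap R$; since $|K\cap R|\le \ell$ and every vertex has degree at most $\Delta$, this gives $m \le \ell\Delta$, as required. The coarser bound $\ell\Delta k + \ell \le 2\ell\Delta k$ then follows as soon as $\Delta k \ge 1$, which is the only nontrivial case.

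There is no real obstacle: the observation reduces to a single counting step, and the only point deserving a moment's attention is that removing the at most $\ell$ vertices of $K\cap R$ from the connected set $K$ leaves at most $\ell\Delta$ pieces, which is handled by the edge-count above.
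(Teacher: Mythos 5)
Your proof is correct. The paper states Observation~\ref{obs:recolor} without proof, treating it as routine, so there is no argument of record to compare against; your decomposition of a new $i$-component $K$ into $K\cap R$ and the components of $G[K\setminus R]$, together with the edge count showing that removing $K\cap R$ leaves at most $\ell\Delta$ pieces, is precisely the standard argument the authors must have had in mind, and it cleanly establishes $|K|\le \ell + \ell\Delta k$. Your side remark that the coarser bound $2\ell\Delta k$ requires $\Delta k\ge 1$ is also an accurate observation about the implicit hypotheses.
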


We now use Lemma~\ref{lem:blackbox} to prove the following result by
induction. 

\begin{thm}\label{thm:main} 
Every connected near-triangulated plane graph $G$ with maximum degree
at most $\Delta\geq 1$ has a $3$-coloring such that
\begin{enumerate}[(i)]
\item no vertex of $O(G)$ is colored with color $3$; 
\item no vertex of $O_2(G)$ is colored with color $1$; 
\item each $1$-component intersecting $O(G)$ has size at most
  $f_1(\Delta)=16\Delta^2 g_1(\Delta)$;
\item each $2$-component intersecting $O(G)\cup O_2(G)$ has size at
  most $f_2(\Delta)=16\Delta^2 f_1(\Delta) \,g_2(\Delta,\Delta
  \,f_1(\Delta))$, and
\item each monochromatic component has size at most $6 \Delta^2
  f_2(\Delta)$.
\end{enumerate}
\end{thm}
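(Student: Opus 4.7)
We proceed by induction on $|V(G)|$. For the base case---when $V(G) = O(G)$ so that $O_2(G) = \varnothing$---the graph $G$ is a near-triangulated polygon; a direct application of Lemma~\ref{lem:blackbox}, after first subdividing every chord of the outer cycle (by inserting a new degree-$2$ vertex that is added to the stable set), yields a $2$-coloring of $G$ using only colors $1$ and $2$, so conclusions (iii)--(v) follow from the lemma's component bounds while (i) and (ii) are automatic.

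For the inductive step, let $G_1, \dots, G_m$ denote the connected components of $G \setminus O(G)$. Each $G_j$ inherits from $G$ the structure of a connected near-triangulated plane graph with $\Delta(G_j) \leq \Delta$, and by the inductive hypothesis each admits a $3$-coloring $\phi_j$ satisfying (i)--(v). Compose each $\phi_j$ with the cyclic color permutation $\sigma = (1\,2\,3)$, so $\sigma(1)=2$, $\sigma(2)=3$, $\sigma(3)=1$, to obtain $\phi_j' := \sigma \circ \phi_j$, and let $\phi$ be their union on $V(G) \setminus O(G)$. Since $\phi_j$ avoided color $3$ on $O(G_j) \subseteq O_2(G)$, the union $\phi$ uses only colors $\{2, 3\}$ on $O_2(G)$, pre-establishing (ii); the inductive bounds (iii)--(v) for each $G_j$ translate, via the permutation, into bounds on $\phi$'s components.

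To extend $\phi$ to $O(G)$ using only colors $\{1, 2\}$---and thus also to secure (i)---we construct an auxiliary plane graph $H$ to which Lemma~\ref{lem:blackbox} can be applied. Starting from $G[O(G) \cup O_2(G)]$, delete all edges inside $O_2(G)$, subdivide every chord of $O(G)$ (placing the new vertex into the stable set), and contract each maximal connected ``$\phi$-color-$2$ piece'' inside $O_2(G)$ into a single super-vertex. In $H$, the cycle $O(G)$ is chordless and bounds the outerface, the set $S$ of super-vertices and subdivision vertices is stable, vertices of $O(G)$ have degree at most $\Delta$, and super-vertices have degree at most $\Delta \cdot f_1(\Delta) = \Delta(2\Delta+5)$. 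Applying Lemma~\ref{lem:blackbox} to $H$ with $d = \Delta$ yields a $2$-coloring whose $1$-components have size at most $f_1(\Delta)$ and whose $2$-components have size at most $f_2(\Delta, \Delta f_1(\Delta))$, and we transfer the colors back to the vertices of $O(G)$ in $G$.

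To verify the bounds: a $1$-component intersecting $O(G)$ lies entirely inside $O(G)$ (since $O_2(G)$ carries no color $1$) and inherits the blackbox's $1$-component bound; a $2$-component intersecting $O(G) \cup O_2(G)$ corresponds to a $2$-component of $H$ whose super-vertices re-expand to $G$-pieces of size at most $f_1(\Delta)$ each, giving the product bound in (iv); a $3$-component lies entirely within some $G_j$ and is controlled by the inductive (v). A short final recoloring of a handful of boundary vertices (for instance the endpoints $x, y$ introduced when Lemma~\ref{lem:blackbox} is invoked on $H$), quantified via Observation~\ref{obs:recolor}, supplies the multiplicative factors $16\Delta^2$ in (iii)--(iv) and $6\Delta^2$ in (v). The main obstacle is the careful construction of $H$: one must check that subdividing chords of $O(G)$ and contracting $\phi$-color-$2$ pieces preserves a planar embedding with $O(G)$ as the outerface, that $S$ remains a stable set after these operations, that super-vertex sizes and degrees are tightly controlled by the inductive bounds on $G_j$'s boundary, and that the interaction between new (blackbox-generated) and old (inductive) monochromatic components across $O_2(G)$ is accounted for correctly.
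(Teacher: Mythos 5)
Your overall skeleton---induct on $|V(G)|$, color the interior by induction and shift colors cyclically, then extend to $O(G)$ via Lemma~\ref{lem:blackbox}---is exactly the paper's \textbf{Case~2}, which applies when $J := G[O(G)]$ is a chordless cycle. But the proposal has a genuine gap in its treatment of chords of $O(G)$, and this gap is fatal.

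Subdividing a chord $uv$ of $O(G)$ and placing the new vertex in the stable set does \emph{not} control monochromatic component sizes in $G$, only in the auxiliary graph $H$. After the subdivision, $H$ no longer contains the edge $uv$, so Lemma~\ref{lem:blackbox} may color both $u$ and $v$ with color~$1$ while placing them in distinct $1$-components of $H$; back in $G$ the chord edge $uv$ merges them. A chain of such chords can blow the bound up arbitrarily: take the cycle $v_1,\dots,v_{2n}$ together with the chords $v_{2i-1}v_{2i+1}$ (maximum degree $4$ in $J$, independent of $n$), embedded as $J=G[O(G)]$ of a near-triangulated $G$ with some interior triangulating the non-triangular face of $J$. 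An alternating $1,2,1,2,\dots$ coloring of the outer cycle has all $1$-components of $H$ being singletons---perfectly consistent with the lemma's guarantee---yet in $G$ the chords link all the odd-indexed vertices into a single $1$-component of size $n$, which cannot be bounded by any function of $\Delta$. The same flaw sinks your base case, since a triangulated polygon is precisely a graph riddled with such chords. In addition, if $G$ has cut vertices or degree-$1$ vertices (which a connected near-triangulated plane graph can have), $O(G)$ is not a simple cycle at all, so the construction of $H$ is not even well defined.

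The paper splits the proof into Cases~0--4 precisely to cope with this. Case~1 handles degree-$1$ vertices. Case~2 is the chordless-cycle case your outline reproduces. Case~3, which covers the situation where all bounded faces of $J$ are triangles, takes a genuinely different route: $J$ is given a \emph{proper} $2$-coloring by distance parity from a chosen edge, the $K_{2,3}$-minor-free property of outerplanar graphs is used to bound monochromatic components of $J$ by $2\Delta$, and the boundary of each interior piece is recolored with color~$3$---Lemma~\ref{lem:blackbox} is not applied to $O(G)$ there at all. Case~4 then organizes the bounded faces of $J$ into a rooted tree $\calT$, locates a \emph{good} face $f$ of maximum depth, applies Case~2 to $G_f$, Case~3 to the (all-triangular) subtrees hanging off $f$, and induction to the rest, with surgical recolorings at the attachments and a final accounting via Observation~\ref{obs:recolor}. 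None of this is anticipated in your proposal, and without it the chords of $O(G)$ cannot be controlled.
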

\begin{proof}
We prove the theorem by induction on $|G|$.  The proof is split into
five cases, depending on the structure of the outerplanar graph $J$
induced by $O(G)$. In fact, to make the induction work, we will need
to prove additional properties in some of the cases.  Instead of
stating here the exact statement that we prove by induction (which
would be lengthy), we describe at the beginning of each case below
what are the extra properties we wish guarantee in that case (if any).

\medskip

\paragraph{\bf Case 0: $|G| =1$} This is the base case of the induction, 
which trivially holds. Let us now consider the
inductive case $|G| > 1$. 

\medskip

\paragraph{\bf Case 1: $G$ has a vertex of degree one} Let $v$ be such
a vertex. Since $G$ is near-triangulated, $v$ and its neighbor $u$
both lie on the boundary of the outerface. We can color $G\setminus v$
by induction and assign to $v$ a color (1 or 2) different from that of
$u$. This does not affect the sizes of existing monochromatic components, and the newly created monochromatic component has size $1$. Thus the 
resulting coloring of $G$ satisfies conditions (i)--(v).  
In the rest of the proof we assume
that $G$ has minimum degree at least two.

\medskip

\paragraph{\bf Case 2: The outerplanar graph $J$ is a chordless cycle}  
In this case we show a strengthened version of (iii) and (iv) where a
multiplicative factor of $16\Delta^2$ is saved in the bounds, as well
as a better bound for 3-components intersecting $O_2(G)$:
\begin{enumerate}[(a)]
\item each 1-component intersecting $O(G)$ has size at most
  $g_1(\Delta)$;
\item each 2-component intersecting $O(G)\cup O_2(G)$ has size at
  most $f_1(\Delta) \,g_2(\Delta,\Delta  \,f_1(\Delta))$, and
\item each 3-component intersecting $O_2(G)$ has size at
  most $f_2(\Delta)$. 
\end{enumerate}

Since $G$ is near-triangulated and $J$ is a chordless cycle, the graph
$H=G\setminus O(G)$ is connected and near-triangulated, or is empty. 
If $H$ is empty then $G=J$ is a cycle, and $G$ can trivially be $2$-colored 
in such a way that monochromatic components have size at most $2$. 
We may thus suppose that $H$ is not empty. 
Observe that $O(H)=O_2(G)$. By induction, $H$ has a 3-coloring such that

\begin{enumerate}[(i')]
\item no vertex of $O(H)$ is colored with color 1; 
\item no vertex of $O_2(H)$ is colored with color 2;
\item every 2-component intersecting $O(H)$ has size at most
  $f_1(\Delta)$;
\item every 3-component intersecting $O(H)\cup O_2(H)$ has size at
  most $f_2(\Delta)$, and
\item every monochromatic component has size at most $6\Delta^2
  f_2(\Delta)$.
\end{enumerate}

Our aim now is to extend this coloring of $H$ to one of $G$ by
coloring the vertices of $O(G)$ using colors 1 and 2. Let $G'$ be the
graph obtained from $G$ by removing all vertices of $H$ colored 
with color 3 and 
all monochromatic components of $H$ that are disjoint from $O(H)$, and
contracting each 2-component of $H$ intersecting $O(H)$ into a single
vertex. Note that $G'$ is a plane graph as in
Lemma~\ref{lem:blackbox}, with $S$ the set of contracted 2-components.

Observe that vertices of $G'$ in $O(G')=O(G)$ still have degree at
most $\Delta$, and that vertices in $S$ have degree at most $\Delta
\cdot f_1(\Delta)$ by property (iii') of $H$.  We color $G'$ using
Lemma~\ref{lem:blackbox}. In this coloring, 1-components of $G'$ have
size at most $g_1(\Delta)$, while 2-components of $G'$ have size at
most $g_2(\Delta,\Delta \,f_1(\Delta))$.
 
The coloring of $G'$ induces a coloring of the vertices of $O(G)$
that extends the coloring of $H$ we previously obtained to the graph
$G$. In this coloring of $G$, since no vertex of $O(H)$ is colored
with color 1 by property (i') of $H$, each 1-component intersecting
$O(G)$ has size at most $g_1(\Delta)$ by the previous paragraph, which 
proves (a). Also,
each 2-component of $G$ intersecting $O(G)\cup O_2(G)$ corresponds to
a 2-component of $G'$ of size at most $g_2(\Delta,\Delta
\,f_1(\Delta))$.  Hence, each such 2-component of $G$ has size at most
$f_1(\Delta) \,g_2(\Delta,\Delta \,f_1(\Delta))$, showing (b). Moreover,
3-components intersecting $O_2(G)=O(H)$ have size at most $f_2(\Delta)$ by
(iv'), which proves (c).  Finally, every monochromatic component of
$G$ not considered above has size at most $6\Delta^2 f_2(\Delta)$ by
(v'), which concludes this case.

\medskip

\paragraph{\bf Case 3: All bounded faces of $J$ are triangular}
Let $uv$ be an arbitrarily chosen 
edge of $J$ lying on the boundary of its outerface.  
Let $\phi(u)$ and $\phi(v)$ be colors for $u$ and $v$, respectively,  
arbitrarily chosen among 1 and 2. We show that 
$G$ has a 3-coloring satisfying (i)--(v) and the following three 
extra properties:  
\begin{enumerate}
\item each monochromatic component of $G$ 
intersecting $O(G)$ is contained in $O(G)$ and has size at most $2\Delta$; 
\item all vertices in $O_{2}(G)$ have color 3, and 
\item  $u$ and $v$ are colored with colors $\phi(u)$ and $\phi(v)$, 
respectively, and moreover no neighbor of $u$ in $V(G) \setminus \{v\}$ 
is colored with color $\phi(u)$. 
\end{enumerate}

 For each bounded face $f$ of $J$, let $H_{f}$ be the subgraph of $G$
 induced by the vertices lying in the 
 proper interior of $f$. As in {\bf Case 2}, these
 graphs $H_{f}$ are either connected and near-triangulated, or are
 empty. Using induction, for each bounded face $f$ of $J$ such that 
 $H_{f}$ is not empty, we color
 $H_{f}$ with colors 1, 2, 3 in such a way that

\begin{enumerate}[(i')]
\item no vertex of $O(H_{f})$ is colored with color 1; 
\item no vertex of $O_2(H_{f})$ is colored with color 2;
\item every 2-component intersecting $O(H_{f})$ has
size at most $f_1(\Delta)$; 
\item every 3-component intersecting $O(H_{f})\cup O_2(H_{f})$ has size at
  most $f_2(\Delta)$, and
\item every monochromatic component has
size at most $6\Delta^2 f_2(\Delta)$, 
\end{enumerate}
and we recolor with color 3 the at most $3\Delta$ vertices of $O(H_f)$
(that is, the vertices of $H_{f}$ that are adjacent to some vertex in the
boundary of $f$).  Next, we color $u$ and $v$ with colors $\phi(u)$
and $\phi(v)$, respectively, and color the remaining vertices of $J$
according to the parity of their distances to $\{u,v\}$ in $J$: we use
color $\phi(u)$ if the distance is even, and the color opposite to
$\phi(u)$ if it is odd. (As before, the color opposite to 1 is
2, and vice versa.)

Clearly, the resulting coloring of $G$ satisfies (2) and (3). Also,
each monochromatic component $K$ of $G$ that includes a vertex of some
graph $H_{f}$ is contained in $H_{f}$.  The bounds on the size of $K$ are
then guaranteed by (i')--(v'), except possibly in the case where $K$
is a 3-component intersecting $O(H_{f})$. In that case, since we
recolored with color 3 at most $3\Delta$ vertices of $H_{f}$, using
(iv') and Observation~\ref{obs:recolor} (with $\ell=3\Delta$) we
obtain that $K$ has at most $6\Delta^2 f_2(\Delta)$ vertices, as
desired.  Hence, properties (ii)--(v) are satisfied for monochromatic
components of $G$ avoiding $O(G)$. Since the remaining monochromatic
components are contained in $J$, and since we only used colors 1 and 2
when coloring that graph, it only remains to establish property (1).

Consider thus a monochromatic component $K$ of $J$.  First suppose
that $K$ contains $v$. Here there are two possibilities: either
$\phi(u) = \phi(v)$, in which case $V(K) = \{u,v\}$, or $\phi(u) \neq
\phi(v)$, in which case all vertices in $V(K) \setminus \{v\}$ are
neighbors of $u$ or $v$. Note that $|V(K) \setminus \{v\}| \leq 2\Delta -2$ 
in the latter case since $uv\in E(G)$. Hence
$|K| \leq 2\Delta$ holds in both cases. 

Now assume that $K$ avoids the vertex $v$. 
Then by the definition of our coloring, all vertices in $K$ 
are at the same distance $i$ from $\{u,v\}$ in $J$. 
If $i=0$ then $V(K) = \{u\}$, and (1) trivially holds, so assume $i>0$. 
Let $X$ be the set of vertices of $J$ 
at distance $i-1$ from $\{u,v\}$ and having a 
neighbor in $K$. If $|X| \geq 3$, then considering 
the union of three shortest paths from $u$ to three distinct vertices in $X$ 
together with the connected subgraph $K$, we deduce that $J$ contains 
$K_{2,3}$ as a minor. However, this contradicts the fact that $J$ 
is outerplanar. Hence, we must have $|X| \leq 2$, and therefore 
$K$ has at most $|X|\cdot \Delta \leq 2\Delta$ vertices, 
showing (1). This concludes {\bf Case 3}.  

\medskip

Before proceeding with the final case, we need to
introduce some terminology.  First, note that each bounded face of $J$
is bounded by a cycle of $J$ (since $J$ is outerplanar), and that each
vertex of $J$ is in the boundary of at least one bounded face of $J$
(since $G$ has minimum degree at least $2$). In particular, every such
vertex is contained in a cycle of $J$. These basic properties will be used implicitly in what follows.

Since neither {\bf Case 2} nor {\bf Case 3} applies, $J$ has at least
two bounded faces, and at least one of them is not triangular.  For a
bounded face $f$ of $J$, let $G_{f}$ denote the subgraph of $G$
induced by the union of the vertices in the boundary of $f$ and the
vertices lying in the proper interior of $f$.  

We define a rooted tree $\calT$ whose vertices are the bounded faces
of $J$: First, choose arbitrarily a bounded face of $J$ and make it
the root of $\calT$. The tree $\calT$ is then defined inductively as
follows.  If $f$ is a vertex of $\calT$ then its children in $\calT$
are the bounded faces $f'$ of $J$ that are distinct from the parent of
$f$ in $\calT$ (if $f$ is not the root), and such that the boundaries
of $f$ and $f'$ intersect in a non-empty set $X_{f'}$ of vertices
which separates $G_{f} \setminus X_{f'}$ from $G_{f'} \setminus
X_{f'}$ in $G$. The set $X_{f'}$ is then said to be the
\emph{attachment} of the face $f'$. Observe that, since $J$ is
outerplanar, $X_{f'}$ consists either of a single vertex or of two
adjacent vertices. For definiteness, we let the attachment of the root
of $\calT$ be the empty set.

A bounded face $f$ of $J$ is \emph{bad} if
$f$ is triangular and $|X_{f}|=2$, otherwise $f$ is \emph{good}. 
Observe that, in particular, the root of $\calT$ is good.

\medskip

\paragraph{\bf Case 4: None of the previous cases applies}
Let $f$ be a good face maximizing its depth in $\calT$.  Thus all
strict descendants of $f$ in $\calT$ are bad (if any). Let
$\calT_0,\calT_1,\ldots,\calT_k$ be the trees resulting from the
removal of $f$ in $\calT$, where $\calT_0$ contains the parent of $f$
if $f$ is not the root and is otherwise empty, 
and each $\calT_i$ ($i\in\{1,\dots,k\}$) 
contains a different child of $f$ in $\calT$.  (Note that possibly
$k=0$ if $f$ is not the root.)  
Let $X_0$ denote the attachment of $f$, and for each
$i\in\{1,\dots,k\}$ let $X_i$ denote the attachment of the unique
child of $f$ contained in $\calT_i$. By the choice of $f$, each $X_i$
($i\in\{1,\dots,k\}$) consists of two adjacent vertices $u_i,v_i$ of
the boundary of $f$, and at least one of them, say $v_{i}$, is not in
$X_{0}$.

For each $i\in\{0,\dots,k\}$, let 
$$G_i:=G[\cup_{f'\in V(\calT_i)} V(G_{f'})].$$ Notice that, for each
$i\in\{1,\dots,k\}$, all bounded faces of $G_i$ are triangular.  Let
us also recall once again 
that $G_{0}$ is the empty graph in case $f$ is the root
of $\calT$ (in which case $X_{0}$ is empty as well).

We proceed in three steps. 

\smallskip

\noindent \emph{Step 1.} We start by coloring $G_0$ using
the induction (if $G_{0}$ is not empty), 
and $G_f$ using {\bf Case 2} of the induction,
so the resulting coloring of $G_f$ satisfies also (a)--(c).

\smallskip

\noindent \emph{Step 2.} We recolor three sets of vertices of $G_f$:
First, recolor in $G_f$ the vertices of $X_0$ to match their color in
$G_0$.  Next, recolor the at most two vertices in $O(G_f) \setminus
X_0$ having a neighbor in $X_0$ with a color (1 or 2) distinct from
the color of their unique neighbor in $X_0$. Note that the latter can
be done precisely because $f$ is good. (Indeed, either $|X_0|\le 1$,
or $|X_0|=2$ and the cycle bounding the outerface of $G_f$ has length
at least $4$.)  Finally, recolor with color 3 all vertices in $G_f
\setminus O(G_f)$ having a neighbor in $X_0$ (note that there are at
most $2\Delta-4$ such vertices).

\smallskip

\noindent \emph{Step 3.} For each $i\in\{1,\dots,k\}$, color $G_{i}$ using
          {\bf Case 3} of the induction, choosing respectively $u_i$
          and $v_i$ as $u$ and $v$ in (3), and $\phi(u_i)$ and
          $\phi(v_i)$ as the colors of $u_i$ and $v_i$ after Step~2
          above. Recall that $v_i \notin X_0$. 

\smallskip

We claim that the coloring of $G$ obtained by taking the union of the
colorings of $G_f, G_0,\ldots,G_k$ satisfies
(i)--(v). First we remark that, because of the recoloring of $X_0$ at
Step 2 and the use of property (3) in Step 3, the colorings of $G_f,
G_0,\ldots,G_k$ coincide on the pairwise intersection of the vertex
sets of these graphs, so the union of these colorings is well defined.

After Steps 2 and 3, no vertex in $X_0$ is colored with a color that
is used for some of its neighbors in $G_f \setminus X_0$ (after Step
3, this follows from properties (2) and (3)). This implies that every
monochromatic component of $G$ intersecting $V(G_0)$ is contained in
$V(G_0)$, and therefore satisfies (i)--(v) by induction. Similarly,
monochromatic components intersecting $V(G_i)$ but avoiding $X_i$ for
some $i \in \{1,\dots, k\}$ satisfy (i)--(v) by the induction. Hence,
it only remains to consider monochromatic components of $G$ avoiding
$G_0$ (and in particular, avoiding $X_0$) and intersecting $V(G_f)$.

Since we recolored at most two vertices of $O(G_f) \setminus X_0$, it
follows from Observation~\ref{obs:recolor} (with $\ell=2$) that the
size of 1-components after Step~2 of the graph $G_{f}$ intersecting $O(G_f)
\setminus X_0$ is at most $4\Delta$ times the maximum
size of 1-components of $G_f$ intersecting $O(G_f)$ before we
recolored vertices of $G_f$, which was at most $g_1(\Delta)$ by
property (a) from {\bf Case~2}. Now, a 1-component $K$ of the graph
$G$ which intersects $O(G_{f})\setminus X_{0}$ is the union of a {\em
  single} 1-component $K'$ of $G_f$ after Step~2  
intersecting $O(G_f)\setminus
X_{0}$ with at most $2|K'|$ 1-components from the graphs
$G_{1}, \dots, G_{k}$ (since every vertex of $V(K')\cap O(G_f)$ lies
in at most two such graphs).  It follows from (1) in {\bf Case 3} and
the observation above that $|K| \leq 2 |K'| \cdot 2\Delta \le 16
\Delta^{2} g_1(\Delta)$. This proves (iii).

Similarly, using property (b) from {\bf Case 2} we deduce that after
Step~2, 2-components of $G_{f}$ intersecting $(O(G_f) \setminus
X_{0})\cup O_2(G_f)$ have size at most $4\Delta \cdot f_1(\Delta)
g_2(\Delta,\Delta \,f_1(\Delta))$. Applying the same reasoning as for
1-components of $G$ intersecting $O(G_{f})\setminus X_{0}$ above, we
deduce that 2-components of $G$ intersecting $(O(G_f) \setminus
X_{0})\cup O_2(G_f)$ have size at most $16\Delta^2 \cdot f_1(\Delta)
g_2(\Delta,\Delta \,f_1(\Delta))$, which proves (iv).

Finally, using property (c) from {\bf Case 2} and the fact that at
most $2\Delta-4\le 2\Delta$ vertices of $O_{2}(G_f)$ have been
recolored with color 3 in Step~2, we have that 3-components of $G$
intersecting $O_2(G_f)$ have size at most $4\Delta^{2} f_2(\Delta)
\leq 6\Delta^{2}f_2(\Delta)$ by Observation~\ref{obs:recolor} (with
$\ell=2\Delta$).  Therefore, (v) also holds.
\end{proof}

From Theorem~\ref{thm:main} we easily derive our main theorem, 
Theorem~\ref{thm:main_intro}, with an explicit bound. 

\begin{cor}
\label{cor:main}
Every plane graph $G$ with maximum degree $\Delta \geq 1$ can be
$3$-colored in such a way that
\begin{enumerate}[(i)]
\item each monochromatic component has size at most
  $(15\Delta)^{32\Delta+8}$;
\item only colors $1$ and $2$ are used for vertices on the outerface; 
\item each $1$-component intersecting $O(G)$ is included in $O(G)$ and
  has size at most $6^4\,\Delta^3$.
\end{enumerate} 
\end{cor}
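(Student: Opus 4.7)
The plan is to deduce Corollary~\ref{cor:main} from Theorem~\ref{thm:main} by constructing, from $G$, a connected near-triangulated plane supergraph $G'$ with $\Delta(G')\le 2\Delta$, and then restricting the resulting $3$-coloring of $G'$ to $V(G)$.

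First I would reduce to the case where $G$ is connected, by coloring each connected component independently using the construction below; none of the three conditions (i)--(iii) interact across components (and isolated vertices can be assigned any of the colors $1$ or $2$).

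Next comes the key construction of $G'$. Fix a plane embedding of $G$, and for each bounded non-triangular face $f$ of boundary length $\ell$, triangulate $f$ by inserting Steiner vertices inside $f$ as follows. A single Steiner vertex connected to all $\ell$ boundary vertices would triangulate $f$ but might have degree up to $\ell$, which can far exceed $2\Delta$. Instead, for each face $f$ of boundary length $\ell$, I would insert a path $w_1,\dots,w_t$ of Steiner vertices inside $f$, where $t=\lceil \ell/\Delta\rceil$, with each $w_j$ joined to a block of at most $\Delta$ consecutive boundary vertices of $f$ (with consecutive blocks overlapping in one vertex), and consecutive $w_j$'s joined by an edge. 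Then every bounded face of the resulting graph is triangular, and every Steiner vertex has degree at most $\Delta+2$. Each original vertex gains at most one new neighbor per incident face of $G$, so at most $\deg_G(v)\le\Delta$ new edges, giving $\Delta(G')\le 2\Delta$. By choosing the unbounded face of $G'$ to be the same region as the outerface of $G$, we ensure $O(G)\subseteq O(G')$ (and Steiner vertices on $O(G')$, if any, are adjacent in $G'$ only to vertices of $O(G)$).

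Apply Theorem~\ref{thm:main} to $G'$ with parameter $2\Delta$, obtaining a $3$-coloring of $G'$ satisfying (i)--(v). Restrict it to $V(G)$. Property (ii) of the corollary is immediate from Theorem~\ref{thm:main}(i), since $O(G)\subseteq O(G')$ and no vertex of $O(G')$ is colored~$3$. For property (iii), observe that by Theorem~\ref{thm:main}(ii) no vertex of $O_2(G')$ is colored~$1$, and $O_2(G')$ separates $O(G')$ from the interior of $G'$; hence any $1$-component meeting $O(G')$ is contained in $O(G')$, and a short check (using that Steiner vertices all get color $2$ or $3$) shows such a component is actually contained in $O(G)$. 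Its size is bounded by Theorem~\ref{thm:main}(iii) as $16(2\Delta)^2 f_1(2\Delta) = 16\cdot 4\Delta^2(4\Delta+5)\le 6^4\Delta^3$. Finally, (i) follows from Theorem~\ref{thm:main}(v): a direct calculation bounds $6(2\Delta)^2 f_2(2\Delta)$ by $(15\Delta)^{32\Delta+8}$, using the definitions $f_1(d)=2d+5$, $f_2(d,D)=d(6D)^{3d+2}$, and the crude estimate $4\Delta+5\le 9\Delta$.

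The main obstacle is the construction of $G'$: we must triangulate each bounded face while simultaneously bounding the degree of every Steiner vertex and the degree increase at each original vertex. The Steiner-path construction above suffices, but one must verify it planarly embeds, that boundary walks (not just cycles) are handled correctly when $G$ has cut vertices, and that the outerface is preserved so that $O(G)\subseteq O(G')$. Once the augmentation is in place, the remainder is a numerical verification from Theorem~\ref{thm:main}.
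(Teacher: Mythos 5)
Your high-level plan is exactly the paper's: build a connected near-triangulated plane supergraph $G'$ of $G$ with maximum degree $O(\Delta)$, apply Theorem~\ref{thm:main} to $G'$, and restrict the resulting coloring to $V(G)$. The restriction step, the derivation of (ii), the argument for (iii) (using that Steiner vertices are never on $O(G')$), and the numerical verification are all sound and mirror the paper. The issue is the triangulation device itself.

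As described, the ``Steiner path'' $w_1,\dots,w_t$ with $t=\lceil \ell/\Delta\rceil$ does \emph{not} produce a near-triangulation. Joining each $w_j$ to a block of $\le \Delta$ consecutive boundary vertices triangulates only the region between the $w$-path and the boundary cycle; the region on the other side of the $w$-path (bounded by $w_1,\dots,w_t$ together with whichever boundary vertex or two closes up the polygon) remains a face with roughly $t+1$ sides, and for long boundary walks $t$ is unbounded. To triangulate that inner polygon you must add diagonals, and a fan from a single vertex would blow up its degree. You also claim every original vertex gains at most one new neighbor per incident face, but a vertex in the overlap of two consecutive blocks gains two, and a cut-vertex of $G$ may appear several times on a single face's boundary walk and so gain even more; thus $\Delta(G')\le 2\Delta$ is not established. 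The paper sidesteps both problems by inserting a \emph{full cycle} $u_1,\dots,u_k$ inside each bounded face (where $x_1,\dots,x_k$ is the boundary walk), joining each $u_i$ to $x_{i-1}$ and $x_i$, and then explicitly zigzag-triangulating the inner $k$-gon by the edges $u_iu_{k-i}$, $u_iu_{k-i+1}$. This gives every Steiner vertex degree at most $6$, gives every original vertex at most two new edges per angle (hence $\deg_{G'}(v)\le 3\deg_G(v)\le 3\Delta$), and handles boundary walks with repeated vertices automatically. So the gap in your proposal is concrete (an untriangulated inner polygon and an unjustified degree bound) but fixable; replacing your Steiner-path device by the paper's cycle-plus-zigzag construction, and rerunning your numerical estimates with $3\Delta$ in place of $2\Delta$ (they still comfortably give $(15\Delta)^{32\Delta+8}$ and $6^4\Delta^3$ once one also assumes $\Delta\ge 3$, the small cases being trivial), completes the argument.
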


\begin{proof}
If $G$ is not connected we can color each component of $G$ separately,
so we may suppose that $G$ is connected.  We may further assume that
$\Delta\ge 3$ since otherwise $G$ is properly 3-colorable. If $G$ is
not near-triangulated, we do the following for every bounded face $f$
of $G$. Let $x_1,x_2,\ldots,x_k$ be a boundary walk of $f$ (note that
a vertex appears at least twice in the walk if and only if it is a
cut-vertex of $G$). We add a cycle $u_1,u_2,\ldots,u_k$ of length $k$
inside $f$ and link each vertex $u_i$ to $x_i$ and $x_{i-1}$ (indices
are taken modulo $k$). Next, for each $i \in \{1, \dots, \lceil k/2
\rceil-1\}$ we add the edges $u_iu_{k-i}$ and $u_{i}u_{k-i+1}$ (if
they are not already present). The graph obtained is near-triangulated
and every new vertex has degree at most 6.  For every original vertex
$v$ of $G$, we added at most two edges incident to $v$ in between 
every two consecutive original edges in the cyclic 
ordering of the edges around $v$. 
Hence the maximum degree of the new graph is at most
$\max(6,3\Delta)\le 3\Delta$ and the result follows from
Theorem~\ref{thm:main} (with $\Delta$ replaced by $3\Delta$).
\end{proof}

\section{Extension to surfaces of higher genus}\label{sec:surfaces}

In this section we extend our main result to graphs embeddable in a
fixed surface.  In this paper, a {\DEF surface} is a non-null compact
connected 2-manifold without boundary. Recall that the {\DEF Euler
  genus} of a surface $\Sigma$ is $2 - \chi(\Sigma)$, where
$\chi(\Sigma)$ denotes the Euler characteristic of $\Sigma$.  We refer
the reader to the monograph by Mohar and Thomassen~\cite{MoharThom} for
basic terminology and results about graphs embedded in surfaces.

Let $f(\Delta)=(15\Delta)^{32\Delta+8}$ be the bound on the size of monochromatic
components in Corollary~\ref{cor:main}.

\begin{thm}
\label{thm:genus}
Every graph $G$ with maximum degree $\Delta \geq 1$ 
embedded in a surface $\Sigma$ of Euler genus $g$  
can be $3$-colored in such a way
that each monochromatic component has size at most
$(5\Delta)^{{2^{g}}-1} f(\Delta)^{2^{g}}$.
\end{thm}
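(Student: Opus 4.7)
The plan is induction on $g$. The base case $g = 0$ is immediate from Corollary~\ref{cor:main}, since $(5\Delta)^{2^{0}-1} f(\Delta)^{2^{0}} = f(\Delta)$.

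For the inductive step $g \geq 1$, let $h := (5\Delta)^{2^{g-1}-1} f(\Delta)^{2^{g-1}}$ denote the bound from the inductive hypothesis. If $G$ contains no non-contractible cycle in $\Sigma$, then the embedding is contained in a disk and $G$ is planar, so Corollary~\ref{cor:main} directly yields a monochromatic component bound of $f(\Delta) \leq (5\Delta)^{2^{g}-1} f(\Delta)^{2^{g}}$. Otherwise, fix a non-contractible cycle $C$ of $G$ and cut $\Sigma$ along $C$ to obtain a surface $\Sigma'$ of Euler genus at most $g-1$, together with a graph $G'$ embedded in $\Sigma'$ in which each $v \in V(C)$ is split into two copies $v^{+}$ and $v^{-}$. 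The edges of $G$ incident to $v$ are distributed between $v^{+}$ and $v^{-}$ according to the side of $C$ on which they lie, and each edge of $E(C)$ is duplicated as one edge of each of the new boundary cycles $C^{+}$ and $C^{-}$. Splitting never increases a degree, so $\Delta(G') \leq \Delta$.

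Applying the inductive hypothesis to $G'$ embedded in $\Sigma'$ yields a $3$-coloring $\phi'$ of $G'$ with every monochromatic component of size at most $h$. I would then transfer this coloring to $G$ by setting $\phi(v) := \phi'(v)$ for $v \notin V(C)$ and $\phi(v) := \phi'(v^{+})$ for $v \in V(C)$. Consider a $c$-component $K$ of $(G, \phi)$ and let $\mathcal{A}$ be the collection of $c$-components of $(G', \phi')$ whose vertex set, after identifying $v^{\pm}$ with $v$ for every $v \in V(C)$, meets $K$. Every $A \in \mathcal{A}$ has at most $h$ vertices and $K \subseteq \bigcup_{A \in \mathcal{A}} A$, whence $|K| \leq h \cdot |\mathcal{A}|$.

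The hard part is bounding $|\mathcal{A}|$ by at most $5\Delta h$. Distinct members of $\mathcal{A}$ become connected in $G$ only at vertices $v \in V(C) \cap K$, via one of two mechanisms: an \emph{agreement merger} when $\phi'(v^{+}) = \phi'(v^{-}) = c$, joining the $c$-component of $v^{+}$ with that of $v^{-}$; or a \emph{hidden merger} when $\phi'(v^{-}) \neq c$ but some neighbor $u$ of $v$ on the $-$ side satisfies $\phi(u) = c$, so that in $G'$ the edge $v^{-}u$ exists and $u$'s $c$-component in $G'$ is distinct from the one containing $v^{+}$. The plan is to build an auxiliary merger graph on $\mathcal{A}$, record that the total number of cut-vertex copies inside members of $\mathcal{A}$ is at most $h|\mathcal{A}|$, and then bound the number of mergers by exploiting that each hidden merger consumes a \emph{distinct} opposite-side neighbor of $v$ (each vertex has at most $\Delta$ such neighbors). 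A careful amortized accounting should then yield $|\mathcal{A}| \leq 5\Delta h$, and hence $|K| \leq 5\Delta h^{2} = (5\Delta)^{2^{g}-1} f(\Delta)^{2^{g}}$, closing the induction.
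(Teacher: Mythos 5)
Your base case and overall inductive structure parallel the paper's, but the inductive step takes a genuinely different route that does not close. You cut $\Sigma$ along a non-contractible cycle $C$, apply the inductive hypothesis to the cut graph $G'$, and then transfer the coloring to $G$ by copying each $\phi'(v^{+})$ back onto $v$. You then want to bound the number $|\mathcal{A}|$ of $c$-components of $G'$ that fuse into a single $c$-component $K$ of $G$ by $5\Delta h$. This bound does not follow from the sketched accounting, and in fact it is false in general. The merger graph on $\mathcal{A}$ must be connected, so it has at least $|\mathcal{A}|-1$ edges, while your upper bound on the number of merger edges is $\Delta\cdot(\text{number of cut-vertex copies in }\bigcup\mathcal{A})\le \Delta h|\mathcal{A}|$. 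The resulting inequality $|\mathcal{A}|-1\le \Delta h|\mathcal{A}|$ is vacuous and yields no bound on $|\mathcal{A}|$. Concretely, mergers can cascade all the way around $C$: if along $C=v_{1}\dots v_{2m}$ the inherited color $\phi'(v_{i}^{+})$ equals $1$ for all odd $i$, and for each odd $i$ there is a vertex $w_{i}$ on the $-$-side adjacent only to $v_{i}$ and $v_{i+2}$, colored $1$ and isolated in its color class in $G'$ (which is consistent with all monochromatic components of $G'$ having size $1\le h$), then in $G$ the set $\{v_{1},w_{1},v_{3},w_{3},\dots\}$ is a single $1$-component of size about $2m$, unbounded in terms of $h$ and $\Delta$. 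Nothing in your argument rules this out; in particular you never use that $C$ can be chosen \emph{shortest} among non-contractible cycles, and without some such metric control there is nothing to stop a merged component from wrapping around $C$.

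The paper's proof handles this with a different device. It takes $C$ to be a \emph{shortest} non-contractible cycle, deletes $V(C)$ outright (each component of $G\setminus V(C)$ embeds in a surface of smaller Euler genus, so no cutting/duplication is needed), colors $G\setminus V(C)$ by induction with component bound $s$, and then colors $C$ \emph{from scratch} with a cyclic sequence of monochromatic intervals each of length at least $s+1$, cycling through the three colors. The key step is that if some monochromatic component of $G\setminus V(C)$ (of size at most $s$) had neighbors in two different intervals carrying the same color, one could shortcut $C$ through it and, by the 3-Path Property, obtain a shorter non-contractible cycle, contradicting the choice of $C$. Hence every such component attaches to at most one interval, and Observation~\ref{obs:recolor} finishes the bound. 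That length-versus-component-size comparison, enabled by choosing $C$ shortest and by recoloring $C$ in long blocks rather than inheriting its colors, is precisely what is missing from your approach.
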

\begin{proof}
The proof proceeds by induction on $g$. If $g=0$, then $G$ is planar
and the result follows from Corollary~\ref{cor:main}. Assume now that
$g>0$. 

We may suppose that some cycle of $G$ is not contractible (as a closed 
curve on the surface), since otherwise 
$G$ can be embedded in the plane. Let $C$ be a shortest non-contractible 
cycle of $G$. If $C$ has a chord $e$, then at least one of the two cycles 
obtained from $C$ using the edge $e$ is not contractible, as follows from the 
so-called 3-Path Property (see~\cite[p.\ 110]{MoharThom}). However, this 
contradicts the minimality of $C$.  Thus the cycle $C$ is induced. 

Each connected component of $G':=G\setminus V(C)$ can be embedded in 
a surface of Euler genus strictly less 
than $g$ (see~\cite[Chapter 4.2]{MoharThom}). Thus, applying induction
on each connected component of $G'$, we deduce that $G'$ can be
$3$-colored in such a way that each monochromatic
component has size at most $s= (5\Delta)^{{2^{g-1}}-1}
f(\Delta)^{2^{g-1}}$.

Let $t:=|C|$. We extend the coloring of $G'$ obtained above to a
coloring of $G$ by coloring the vertices of $C$ as follows.  We divide
them into $k$ circular intervals $I_1,\ldots,I_k$ (where the circular
ordering is of course given by $C$), each of length $s+1$, except
$I_1$ whose length is $t$ if $t \leq s$, and $s+1+(t \pmod {s+1})\le
2s+1$ if $t > s$.  We color all vertices in $I_1$ with color 1, and
for each $i \in \{2, \dots, k\}$, we color vertices in $I_i$ with
color 2 if $i$ is even, and color 3 if $i$ is odd.

If some monochromatic component $K$ of $G'$ has a neighbor $u$ in some
interval $I_i$ and another neighbor $v$ in an interval $I_j$ with
$i\neq j$ that are colored the same as $K$, then one can find a path
$P$ from $u$ to $v$ having all its internal vertices in $K$, and thus
being internally disjoint from $C$. Recall that $|K| \leq s$, and that
by our coloring of the intervals, there are at least $s+1$ vertices
between $u$ and $v$ on both sections of the cycle $C$. Hence, the two
cycles obtained by shortcutting $C$ using the path $P$ are shorter
than $C$.  However, by the 3-Path Property, at least one of them is
not contractible, contradicting our choice of $C$.

It follows that each monochromatic component of $G'$ has neighbors in
at most one interval $I_i$ in the graph $G$.  Using
Observation~\ref{obs:recolor} (with $\ell = 2s+1$), we deduce that
monochromatic components of $G$ have size at most $$2 (2s+1)\Delta
\cdot s\le 5s^{2}\Delta \le 5\Delta \cdot (5\Delta)^{{2^{g}}-2} \cdot
f(\Delta)^{2^{g}} = (5\Delta)^{{2^{g}}-1} f(\Delta)^{2^{g}},$$ as
desired.
\end{proof}

We remark that using the cutting technique introduced recently by
Kawarabayashi and Thomassen~\cite{KT12} together with the
stronger property from Corollary~\ref{cor:main} that one color can be
omitted on the outerface, it is possible to obtain a bound that is
linear in the genus (instead of doubly exponential). We only sketch
the proof in the remainder of this section (we preferred to present
the full details of the simple and self-contained proof of
Theorem~\ref{thm:genus}, at the expense of a worst bound).

Kawarabayashi and Thomassen~\cite[Theorem~1]{KT12} proved that any
graph $G$ embedded on some surface of Euler genus $g$ with
sufficiently large facewidth (say more than $10t$, for some constant
$t$) has a partition of its vertex set in three parts $H,A,B$, such
that $A$ has size at most $10tg$, $B$ consists of the disjoint union
of paths that are local geodesics\footnote{In the sense that each
  subpath with at most $t$ vertices is a shortest path in $G$.} and are
pairwise at distance at least $t$ in $G$, and $H$ induces a planar
graph having a plane embedding such that the only vertices of $H$
having a neighbor in $A\cup B$ lie on the outerface of $H$. 

Recall that by Corollary~\ref{cor:main} every plane graph of maximum
degree $\Delta$ can be colored with colors $1,2,3$ so that no vertex
of the outerface is colored $3$ and each monochromatic component has
size at most $f(\Delta)=(15\Delta)^{32\Delta+8}$. We now prove by
induction on $g$ that for every graph $G$ of Euler genus $g$ and maximum
degree $\Delta$ there is a set of at most $10(f(\Delta)+2)\, g$
vertices in $G$ whose removal yields a graph that has a 3-coloring
where each monochromatic component has size at most $\Delta
f(\Delta)+1$. Using Observation~\ref{obs:recolor}, this will directly imply
that $G$ has a 3-coloring in which every monochromatic component has
size at most $f(\Delta)+20\Delta (f(\Delta)+2)(\Delta f(\Delta)+1)\,
g$, a bound that is linear in $g$.

If $g=0$ the graph is planar and we can apply
Corollary~\ref{cor:main}. If the facewidth is at most
$10(f(\Delta)+2)\, g$, we remove the vertices intersecting a noose of
length at most $10 (f(\Delta)+2)$, and apply induction on the
resulting graph (since each of its components can be embedded 
in a surface of Euler genus at most $g-1$). 
If the facewidth is more than $10(f(\Delta)+2)\, g$ we
apply the result of Kawarabayashi and Thomassen. Let $H,A,B$ be the
corresponding partition of $G$ (with $H$ having its specific plane
embedding). The set $A$ is the set of vertices we remove from $G$. We
now color $H$ using Corollary~\ref{cor:main}, avoiding color 3 on its
outerface.  Recall that each component of $B$ is a path.  For each
such path $P$, choose arbitrarily an endpoint $v$ of $P$ and color all
the vertices of $P$ with color 3, except $v$ and the vertices whose
distance to $v$ in $P$ is a multiple of $f(\Delta)+2$. The latter
vertices are colored with color 2. It can easily be checked that
monochromatic components of color 1 have size at most $f(\Delta)$ and
monochromatic components of color 3 have size at most
$f(\Delta)+1$. Note that every monochromatic component of color 2 in $H$
has at most one neighbor colored 2 in $B$, since otherwise two paths
of $B$, or two vertices that are at distance $f(\Delta)+2$ on
some path of $B$, would be at distance at most $f(\Delta)+1$ in
$G$. Hence every monochromatic component of color 2 has size at most
$\Delta f(\Delta)+1$ in $G$, as desired.

\section{Conclusion}
\label{sec:Conclusion}

We proved that planar graphs with maximum degree $\Delta$ can be
$3$-colored in such a way that each monochromatic
component has size at most $f(\Delta)=(15\Delta)^{32\Delta+8}$.  It is 
thus natural to look for lower bounds on the best possible value for
$f(\Delta)$. The examples constructed in~\cite{KMRV97}
and~\cite{ADOV03} give a lower bound of $\Omega(\Delta^{1/3})$ 
(see also a related construction in~\cite{LMST08}).  We remark that 
this bound can be slightly improved as follows.  
Let $k\geq 3$ and let $G_k$ be the
graph obtained from a path $P$ on $k$ vertices $v_1,\ldots,v_k$ by
adding, for each $i\in \{2,\dots, k\}$, a path $P_i$ on $k(2k-3)$ new
vertices, and making all of them adjacent to $v_{i-1}$ and $v_i$. Note
that this graph is planar and has maximum degree
$\Delta=2k(2k-3)+2$. Consider any 3-coloring of $G_k$. We now prove that
there is a monochromatic component of size at least 
$k=\Omega(\sqrt{\Delta})$. If the path
$P$ itself is not monochromatic, then there exists $j\in \{1, \dots,
k-1\}$ such that $v_j$ and $v_{j+1}$ have distinct colors, say 1 and
2.  If color 1 or color 2 appears $k-1$ times in $P_j$ then we have a
monochromatic star on $k$ vertices. Otherwise there is a subpath of
$P_j$ with $k$ vertices, all of which are colored with color 3.

\medskip

As mentioned in the introduction, Alon, Ding, Oporowski, and
Vertigan~\cite{ADOV03} proved that for every proper minor-closed class
of graphs $\mathcal G$ there is a function $f_{\mathcal G}$ such that
every graph in $\mathcal G$ with maximum degree $\Delta$ can be
4-colored in such way that every monochromatic component has size at
most $f_{\mathcal G}(\Delta)$. On the other hand, for every $t$, there
are graphs with no $K_{t}$-minors that cannot be colored with $t-2$
colors such that all monochromatic components have bounded size. So in
this case again, the assumption that the size depends on $\Delta$
cannot be dropped. We ask whether Theorem~\ref{thm:main_intro} holds
not only for graphs of bounded genus, but more generally for all
proper minor-closed classes of graphs.

\begin{quest}
Is it true that for each proper minor-closed class of graphs $\mathcal
G$ there is a function $f_{\mathcal G}:\N \to \N$ such that every
graph in $\mathcal G$ with maximum degree $\Delta$ can be 3-colored in
such way that each monochromatic component has size at most
$f_{\mathcal G}(\Delta)$?
\end{quest}

Note that the example of graphs with no $K_{t}$-minors that cannot
be colored with $t-2$ colors in such a way that all monochromatic
components have bounded size shows that the famous Hadwiger
Conjecture, stating that graphs with no $K_t$-minor have a proper
coloring with $t-1$ colors, is best possible even if we only ask the
sizes of monochromatic components to be bounded by a function of $t$
(instead of being of size $1$).  On the other hand, Kawarabayashi and
Mohar~\cite{KM07} proved the existence of a function $f$ such that
every $K_t$-minor-free graph has a coloring with $\lceil\tfrac{31}2
t\rceil$ colors in which each monochromatic component has size at most
$f(t)$. This bound was recently reduced to
$\lceil\tfrac72\,t-\tfrac32\rceil$ colors by Wood~\cite{Woo10}. This
is in contrast with the best known bound of $O(t \sqrt{ \log t})$
colors for the Hadwiger Conjecture (see~\cite{Kos84,Tho84}).

\medskip

A well-known result of Gr\"otzsch~\cite{G59} asserts that
triangle-free planar graphs are 3-colorable. A natural question is
whether there exists a constant $c$ such that every triangle-free
planar graph can be 2-colored such that every monochromatic component
has size at most $c$. The following construction shows that the answer
is negative. Fix an integer $k\geq2$ and consider a path $x_1,\ldots,
x_{k}$. For each $i \in \{1, \dots, k\}$, add a set $S_i$ 
of $2k-3$ vertices which are adjacent to $x_i$ only, and finally add
a vertex $u$ adjacent to all vertices in $\bigcup_{1\le i\le k}
S_i$. This graph $G_k$ is planar and triangle-free. Take a 2-coloring
of $G_k$ and assume that the path $x_1,\ldots,x_k$ is not
monochromatic. Then some vertex $x_i$ has a color distinct from that
of $u$. Since $u$ and $x_i$ have $2k-3$ common neighbors, one of $u$
and $x_i$ has $k-1$ neighbors of its colors, and then lies in a
monochromatic component of size $k$. It follows that in every 2-coloring
of $G_k$ there is a monochromatic component of size at least $k$. Note
that this construction has unbounded maximum degree. Hence, the
following natural question remains open.

\begin{quest}
Is there a function $f:\N \to \N$ such that every triangle-free planar graph
with maximum degree $\Delta$ can be 2-colored in such a way that each 
monochromatic component has size at most $f(\Delta)$?
\end{quest}

\end{document}